\documentclass[reqno]{amsart}
\usepackage{graphicx}
\usepackage{amssymb}
\usepackage{cite}
\usepackage{amsmath}
\usepackage{latexsym}
\usepackage{amscd}
\usepackage{amsthm}
\usepackage{mathrsfs}
\usepackage{url}
\usepackage[linktocpage=true,colorlinks,citecolor=blue,linkcolor=blue,urlcolor=blue]{hyperref}
\usepackage{ textcomp}

\vfuzz2pt 
\hfuzz2pt 
\newtheorem{theorem}{Theorem}[section]

\newtheorem{lemma}{Lemma}[section]
\newtheorem{definition}{Definition}[section]

\newtheorem{corollary}{Corollary}[section]

\newtheorem{example}{Example}[section]
\theoremstyle{definition}

\theoremstyle{remark}
\newtheorem{remark}{Remark}[section]
\numberwithin{equation}{section}

\newfont{\bb}{msbm10 at 10pt}
\newfont{\bbt}{msbm10 at 7pt}

\def\r{\hbox{\bb R}}
\def\s{\hbox{\bb S}}

\begin{document}
\title[FREE BOUNDARY MINIMAL HYPERSURFACES IN THE
SCHWARZSCHILD SPACE]{ON free boundary MINIMAL HYPERSURFACES IN THE
RIEMANNIAN SCHWARZSCHILD SPACE}

\author[Barbosa]{Ezequiel Barbosa}     
\address{Departamento de Matem\'atica, Universidade Federal de  Minas Gerais,  Belo Horizonte-Brazil}
\email{ezequiel@mat.ufmg.br}

 \author[Espinar]{Jos\'e  M. Espinar}
 \address{Universidad de C\'adiz - Spain}
 \email{josemaria.espinar@uca.es}
 \thanks{The first author, Ezequiel Barbosa, is partially supported by Brazilian CNPq (Grant 312598/2018-1). The second author, Jos\'e M. Espinar, is partially supported by Spanish MEC-FEDER (Grant MTM2016-80313-P and Grant RyC-2016-19359) and Junta de Andaluc\'{i}a PAIDI (Grant P18-FR-4049).}
\maketitle


\begin{abstract}
In contrast with the 3-dimensional case (cf. \cite{RaMo}), where rotationally symmetric totally geodesic free boundary minimal surfaces have Morse index one; we prove in this work that the Morse index of a free boundary rotationally symmetric totally geodesic hypersurface of the $n$-dimensional Riemannnian Schwarzschild space with respect to variations that are tangential along the horizon is zero, for $n\geq4$.

Moreover, we show that there exist non-compact free boundary minimal hypersurfaces which are not totally geodesic, $n\geq 8$, with Morse index equal to $0$. Also, it is shown that, for $n\geq4$, there exist infinitely many non-compact free boundary minimal  hypersurfaces, which are not congruent to each other, with infinite Morse index.

We also study the density at infinity of a free boundary minimal hypersurface with respect to a minimal cone constructed over a minimal hypersurface of the unit Euclidean sphere. We obtain a lower bound for the density in terms of the area of the boundary of the hypersurface and the area of the minimal hypersurface in the unit sphere. This lower bound is optimal in the sense that only minimal cones achieve it.
\end{abstract}


\section{Introduction}

In General Relativity, asymptotically flat manifolds of non-negative scalar curvature play a crucial role, arising as isolated gravitational systems, due to the physical properties of space-times containing such manifolds as initial data sets (see \cite{C16,CCE16} and references therein). Another key ingredient to study physical properties of these manifolds are the minimal hypersurfaces contained on them; as the Positive Mass Theorem, by R. Schoen and Yau \cite{RSchSYau81,RSchSYau17}, has shown. 

The most simple solution to the Einstein equations that contains an asymptotically flat manifold of non-negative scalar curvature as an initial data set is, clearly, the Lorentz-Minkowski space being the Euclidean space the initial data set. The second one, found in 1916 by K. Schwarzchild \cite{Sch16}, is the Schwarzchild vacuum which is a solution to the Einstein field equations that describes the gravitational field outside a spherical mass; which initial data set is known as the Riemannian Schwarschild manifold. The important difference of this particular solution is the existence of a "singularity"; that we can think of as the event horizon of a "Black Hole". 

The aim of this work is to study stability properties (with respect to the second variation of the area) of minimal hypersurfaces with boundary meeting orthogonally the horizon of the Riemannian Schwarzchild space. We consider, for each $n\geq 3$ and $m>0$, the $n$-dimensional domain
\[
M^{n}:=\left\{x\in\mathbb{R}^{n}\,;\,\, |x|\geq R_{0} \right\}\,,
\]
being $R_{0} := \left(\frac{m}{2}\right)^{\frac{1}{n-2}}$, endowed with the Riemannian metric
\[
g_{Sch}=\left( 1+\frac{m}{2|x|^{n-2}} \right)^{\frac{4}{n-2}}\,\delta\,,
\]
where $|x|^2=\sum\limits_{i=1}^{n}x_i^2$ and $\delta$ denotes the Euclidian metric. We can check that the boundary of $M$, the horizon $S_{0}=\{x\in\mathbb{R}^{n}\,;\,\, |x|=R_{0} \}$, is a closed totally geodesic hypersurface in $M$. Also, we consider the totally geodesic Euclidean hypersurfaces which are the rotations of the coordinate hyperplane $\Sigma _0$ through the origin minus the open ball $B_{0}:=B( R_{0})$ of radius $R_0$ centered at the origin: 
\begin{equation}\label{Sigma}
\Sigma_0=\{x\in\mathbb{R}^{n}\,;\,\, x_{n}= 0 \,,\,\,|x|\geq R_{0} \}.
\end{equation}

This hypersurface is a properly embedded free boundary totally geodesic, in particular minimal, hypersurface in $M$, i.e., the boundary $\partial \Sigma_0$ coincides with $\Sigma_0 \cap \partial M$, its mean-curvature with respect to $g_{Sch}$ vanishes and the boundary $\partial \Sigma_0$ meets $\partial M$ orthogonally.
 
In the 3-dimensional Riemannnian Schwarzschild space, R. Montezuma \cite{RaMo} computed the Morse index of $\Sigma_0$, i.e., the maximum number of directions, tangential along $\partial M$, in which the surface can be deformed in such a way that its area decreases. It was proved that the Morse index of $\Sigma_0$ is one. One of the purposes of this paper is to compute the Morse index of $\Sigma_0$ in the $n$-dimensional Riemannnian Schwarzschild space, $n\geq4$. We obtain the following result:

\begin{theorem}\label{main1}
The Morse index of $\Sigma_0 \subset (M^{n},g_{Sch})$ given by \eqref{Sigma}, up to a space rotation, is zero. 
\end{theorem}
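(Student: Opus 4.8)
The plan is to prove $\Sigma_0$ is stable — i.e.\ that its free‑boundary Morse index, which for a non‑compact hypersurface means the supremum of the dimensions of subspaces of $C_c^\infty(\Sigma_0)$ on which the index form is negative definite, is $0$ — by exhibiting an explicit positive supersolution of the Jacobi operator satisfying the free‑boundary condition, and then invoking the Fischer--Colbrie--Schoen type criterion. All coordinate hyperplanes through the origin being congruent under $O(n)$, I take $\Sigma_0=\{x_n=0,\ |x|\ge R_0\}$. Since $\Sigma_0$ is totally geodesic ($|A|^2\equiv 0$) and the horizon $S_0=\partial M$ is totally geodesic (so the boundary term in the second variation vanishes and the natural boundary condition is Neumann), the index form of a variation tangential along the horizon is
\[
Q(f,f)=\int_{\Sigma_0}\bigl(|\nabla f|^2-\operatorname{Ric}_{g_{Sch}}(\nu,\nu)\,f^2\bigr)\,dA,\qquad f\in C_c^\infty(\Sigma_0),
\]
and the theorem is the assertion $Q\ge 0$. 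It therefore suffices to find $f_0>0$ on $\Sigma_0$ with $\Delta_{\Sigma_0}f_0+\operatorname{Ric}_{g_{Sch}}(\nu,\nu)f_0\le 0$ in the interior and $\partial_\eta f_0\ge 0$ on $\partial\Sigma_0$ ($\eta$ the outward conormal): writing $f=f_0w$ (legitimate since $f_0>0$ everywhere, so $w$ has the same compact support as $f$) one gets
\[
Q(f,f)=\int_{\partial\Sigma_0}f_0w^2\,\partial_\eta f_0\,ds+\int_{\Sigma_0}f_0^2|\nabla w|^2\,dA-\int_{\Sigma_0}f_0w^2\bigl(\Delta_{\Sigma_0}f_0+\operatorname{Ric}_{g_{Sch}}(\nu,\nu)f_0\bigr)\,dA\ \ge\ 0 .
\]

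To produce $f_0$ I would first record the geometry of $\Sigma_0$. Writing $u=1+\tfrac{m}{2|x|^{n-2}}$, $r=|x|\ge R_0$, and letting $\rho$ be the $g_{Sch}$‑distance to $\partial\Sigma_0$, one has $d\rho=u^{2/(n-2)}dr$, so $\Sigma_0$ is the rotationally symmetric warped product $d\rho^2+\psi(\rho)^2 g_{S^{n-2}}$ with $\psi=u^{2/(n-2)}r$. A direct computation (using $\tfrac m2=R_0^{n-2}$ on the boundary) gives
\[
\psi'=\frac{1-\tfrac{m}{2|x|^{n-2}}}{1+\tfrac{m}{2|x|^{n-2}}}\ \ (\,'=d/d\rho\,),\qquad 2\psi\psi''=(n-2)\bigl(1-(\psi')^2\bigr),\qquad \operatorname{Ric}_{g_{Sch}}(\nu,\nu)\big|_{\Sigma_0}=\frac{\psi''}{\psi},
\]
together with $\psi>0$, $\psi'(0)=0$ (because $|x|=R_0$ forces $\tfrac{m}{2|x|^{n-2}}=1$), $\psi$ increasing and convex, and $\psi\sim\rho$ at infinity. (The first identity and the expression for $\operatorname{Ric}$ can also be read off from the static potential of Schwarzschild and from the ambient rotational Killing fields, whose restrictions to $\Sigma_0$ are Jacobi fields with radial part $\psi$.) The candidate is then $f_0:=\psi^{-1}$, i.e.
\[
f_0=\Bigl(1+\tfrac{m}{2|x|^{n-2}}\Bigr)^{-2/(n-2)}\frac{1}{|x|}\ >\ 0 .
\]
It is radial, hence $\partial_\eta f_0=0$ on $\partial\Sigma_0$ because $\psi'(0)=0$; and computing the radial Laplacian $\Delta_{\Sigma_0}f_0=\psi^{2-n}\bigl(\psi^{n-2}f_0'\bigr)'$ and substituting the ODE for $\psi$ yields
\[
\Delta_{\Sigma_0}f_0+\operatorname{Ric}_{g_{Sch}}(\nu,\nu)\,f_0=(4-n)\,\psi^{-3}(\psi')^2\ \le\ 0\qquad (n\ge 4).
\]
Thus $f_0$ is the desired positive supersolution satisfying the free‑boundary condition, so $Q\ge 0$ and the Morse index is $0$.

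The hard part — and the only place where $n\ge 4$ enters — is precisely locating $f_0$ with the right boundary behaviour. The "obvious" Jacobi field, the restriction of the static potential $N=\tfrac{1-m/2|x|^{n-2}}{1+m/2|x|^{n-2}}$ (which satisfies $\Delta_{\Sigma_0}N+\operatorname{Ric}(\nu,\nu)N=0$ exactly), is useless here because it vanishes on the horizon, so $\partial_\eta N\ne 0$ there by the Hopf lemma: it is a Dirichlet, not a Neumann, Jacobi field, which is exactly why one cannot deduce free‑boundary stability from it, and is the mechanism behind $\operatorname{index}=1$ in dimension three. The warped‑product reformulation is what makes the correct candidate $\psi^{-1}$ visible, after which the whole matter collapses to the single sign $4-n\le 0$: for $n=3$ the same $f_0$ gives $\Delta_{\Sigma_0}f_0+\operatorname{Ric}(\nu,\nu)f_0=+\psi^{-3}(\psi')^2\ge 0$, a strict subsolution, consistent with Montezuma's index‑one computation, whereas for $n\ge 4$ it is a genuine supersolution. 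Everything else — the conformal‑change formula for $\operatorname{Ric}_{g_{Sch}}$, verifying the identities for $\psi$, and the one‑line substitution — is routine.
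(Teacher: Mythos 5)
Your proposal is correct: I checked the computations, and in the warped--product coordinates $d\rho^2+\psi(\rho)^2 g_{\mathbb{S}^{n-2}}$ on $\Sigma_0$ one does have $\psi'=\frac{1-m/2r^{n-2}}{1+m/2r^{n-2}}$, $2\psi\psi''=(n-2)\bigl(1-(\psi')^2\bigr)$ (scalar flatness of $g_{Sch}$), hence $\overline{{\rm Ric}}(\xi,\xi)=\psi''/\psi$ along $\Sigma_0$, and then $\Delta_{\Sigma_0}\psi^{-1}+\overline{{\rm Ric}}(\xi,\xi)\psi^{-1}=(4-n)\psi^{-3}(\psi')^2\le 0$ for $n\ge 4$, with vanishing conormal derivative at the horizon since $\psi'(0)=0$; your substitution $f=f_0w$ is exactly the argument of the paper's Lemma \ref{LemFC}, so stability (index zero in the sense of the paper's Definition) follows. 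The overall strategy is thus the same as the paper's -- exhibit a positive function $u$ with $J_{\Sigma}u\le 0$ and $\partial u/\partial\nu=0$ on $S_0\cap\partial\Sigma$ and apply the Fischer--Colbrie criterion -- but your construction of the supersolution is genuinely different. The paper passes to the conformally Euclidean picture over the cone (Lemma \ref{LemRelation}), separates variables \`a la Simons, and solves the radial Sturm--Liouville equation explicitly, arriving at the test function $\frac{2r^{(n-2)/2}}{m+2r^{n-2}}$ in the isotropic coordinate $r=|x|$; your candidate $f_0=\psi^{-1}=\bigl(1+\tfrac{m}{2|x|^{n-2}}\bigr)^{-2/(n-2)}|x|^{-1}$ is a different function (the two agree only when $n=4$), found intrinsically from the rotational symmetry and the scalar-flatness ODE. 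Your route is more self-contained and elementary for the totally geodesic $\Sigma_0$ and makes the role of $n\ge 4$ transparent in the single sign $4-n\le 0$; the paper's conformal/Sturm--Liouville machinery is heavier but is set up to treat all cones $\Sigma_\Gamma$ uniformly, which is what yields Theorems \ref{main1a}, \ref{newMAIN} and \ref{main2} by the same computation.
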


A question that arises here is the following one: Is the hypersurface $\Sigma_0$ the only one, among properly embedded free boundary minimal hypersurfaces, with Morse index equal to zero, up to space rotation, in the Schwarszchild space when $n\geq4$? We answer this question at least for dimension $n\geq 8$.

\begin{theorem}\label{main1a}
There are properly embedded free boundary minimal hypersurfaces, not totally geodesic, in the $n$-dimensional Riemannnian Schwarzschild space with Morse index equal to 0 when $n \geq 8$.
\end{theorem}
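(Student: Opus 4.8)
The plan is to realize the desired hypersurfaces as truncated Euclidean minimal cones and to reduce their stability, through the conformal factor of $g_{Sch}$ and separation of variables, to an exactly solvable one-dimensional Schr\"odinger problem.

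\emph{Construction.} Fix a closed embedded minimal hypersurface $N^{n-2}\subset\mathbb{S}^{n-1}$ with $|A_N|^2\equiv n-2$ --- a generalized Clifford hypersurface $\mathbb{S}^{p}(\sqrt{p/(n-2)})\times\mathbb{S}^{q}(\sqrt{q/(n-2)})$, $p+q=n-2$, $p,q\ge1$ --- so that its Euclidean cone $C(N)=\{t\omega:\ t\ge0,\ \omega\in N\}$ is a minimal hypersurface of $\mathbb{R}^n$, and set $\widehat{\Sigma}:=C(N)\cap M$. Since the position vector $x$ is tangent to $C(N)$, the Euclidean unit normal $\nu_\delta$ satisfies $\langle x,\nu_\delta\rangle=0$, hence $\nu_\delta(u)=0$; by the conformal transformation law for the mean curvature this forces $H_{g_{Sch}}=0$ (as $H_\delta=0$), along $\partial\widehat{\Sigma}=C(N)\cap S_0$ the outward conormal of $\widehat{\Sigma}$ is $-x/|x|$ and is $g_{Sch}$-orthogonal to $S_0$, and $|A^\delta_{C(N)}|^2(t\omega)=(n-2)/t^2\not\equiv0$; thus $\widehat{\Sigma}$ is a properly embedded, non totally geodesic, free boundary minimal hypersurface of $(M^n,g_{Sch})$.

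\emph{Reduction of the second variation.} Doubling across the horizon, $\widehat{\Sigma}$ becomes the complete $\Phi$-invariant minimal hypersurface $C(N)\setminus\{0\}$ of the two-ended Schwarzschild manifold $\mathcal{M}=(\mathbb{R}^n\setminus\{0\},g_{Sch})$, where $\Phi(x)=R_0^2x/|x|^2$ is the isometric reflection across $S_0$; since $S_0$ is totally geodesic, the free boundary second variation of $\widehat{\Sigma}$ is the restriction of the boundaryless second variation of $C(N)\setminus\{0\}$ to $\Phi$-invariant functions, so it suffices to show the latter is nonnegative. Writing $g_{Sch}=u^{4/(n-2)}\delta$ with $u$ $\delta$-harmonic and using $\nu_\delta(u)=0$, $\mathrm{Hess}_\delta u(\nu_\delta,\nu_\delta)=u'/|x|$ on $C(N)$, a conformal computation of $\mathrm{Ric}_{g_{Sch}}(\nu,\nu)$ and $|A_{g_{Sch}}|^2$ gives
\[
Q(f)=\int_{C(N)} u^{\frac{2(n-3)}{n-2}}\Big(|\nabla_\delta f|^2-\frac{n-2}{|x|^2}f^2-\frac{(n-2)m}{u^2|x|^n}f^2\Big)\,d\mu_\delta ;
\]
substituting $f=u^{-\frac{n-3}{n-2}}h$ (the $m^2$-terms cancel), separating variables $h=\sum_k f_k(|x|)\phi_k(\omega)$ in the $\Delta_N$-eigenmodes, substituting $g_k=|x|^{\frac{n-3}{2}}f_k$, and passing to $\sigma=\log(|x|/R_0)$ --- for which $u^2|x|^{n-2}=4R_0^{\,n-2}\cosh^2(\tfrac{n-2}{2}\sigma)$ --- one obtains
\[
Q(f)=\sum_k\|\phi_k\|_{L^2(N)}^2\int_{\mathbb{R}}\Big((\dot g_k)^2+\Big(\varepsilon_0+\lambda_k-\frac{(n-1)/4}{\cosh^2(\tfrac{n-2}{2}\sigma)}\Big)g_k^2\Big)\,d\sigma ,
\]
with $\lambda_k\ge0$ the eigenvalues of $-\Delta_N$ and $\varepsilon_0:=\frac{(n-3)^2}{4}-(n-2)$.

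\emph{Conclusion via P\"oschl--Teller.} Each summand is the quadratic form of $-\partial_\sigma^2+\varepsilon_0+\lambda_k-\frac{(n-1)/4}{\cosh^2(\frac{n-2}{2}\sigma)}$ on $\mathbb{R}$. The potential is the P\"oschl--Teller potential $-\mu(\mu+1)\alpha^2\,\mathrm{sech}^2(\alpha\sigma)$ with $\alpha=\frac{n-2}{2}$ and $\mu=\frac1{n-2}$ (indeed $\mu(\mu+1)\alpha^2=\frac{n-1}{4}$ for this $\mu$), so the factorization $-\partial_\sigma^2-\mu(\mu+1)\alpha^2\,\mathrm{sech}^2(\alpha\sigma)=\big(\partial_\sigma+\mu\alpha\tanh(\alpha\sigma)\big)^{\!*}\big(\partial_\sigma+\mu\alpha\tanh(\alpha\sigma)\big)-\mu^2\alpha^2$ yields the sharp lower bound $\ge-\mu^2\alpha^2=-\tfrac14$. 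Hence each summand is $\ge(\varepsilon_0+\lambda_k-\tfrac14)\|g_k\|^2\ge(\varepsilon_0-\tfrac14)\|g_k\|^2$, and $\varepsilon_0-\tfrac14=\frac{(n-2)(n-8)}{4}\ge0$ exactly when $n\ge8$. Thus $Q\ge0$, so $\widehat{\Sigma}$ has Morse index $0$ for all $n\ge8$ (strictly stable for $n\ge9$; for $n=8$ marginally stable, with a Jacobi field in the radial mode). The substance of the argument --- and the step most prone to error --- is the exact conformal computation of the second variation: every coefficient must come out right so that after the substitution the Robin boundary term cancels and the radial potential is \emph{precisely} P\"oschl--Teller with $\mu=\frac1{n-2}$; a non-sharp estimate (e.g.\ bounding the extra potential by its maximum $\frac{n-1}{4}$) would only give $n\ge9$, so it is this algebraic coincidence that pins the threshold at $n=8$.
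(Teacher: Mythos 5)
Your proposal is correct, and its quantitative core coincides with the paper's: after the conformal reduction to the Euclidean cone and separation of variables, your reduced radial problem is exactly the paper's. Indeed, writing $u=1+\tfrac{m}{2r^{n-2}}$ one has $u^{2}r^{n-2}=4R_0^{n-2}\cosh^{2}\bigl(\tfrac{n-2}{2}\sigma\bigr)$, so the paper's Schwarzschild correction $\tfrac{(n-1)m}{2r^{n}u^{2}}$ in $W_k$ (formula \eqref{Wk}) becomes your $\tfrac{(n-1)/4}{\cosh^{2}(\frac{n-2}{2}\sigma)}$, and your constant $\varepsilon_0+\lambda_k$ is the paper's $\tfrac{4\lambda_k(\Gamma)+(n-2)(n-4)}{4}$ shifted by the standard $\tfrac14$ coming from the logarithmic change of variable; your threshold $\varepsilon_0-\tfrac14=\tfrac{(n-2)(n-8)}{4}\ge 0$ is precisely the paper's condition $4\lambda_1(\Gamma)+(n-2)(n-4)\ge 0$ evaluated at $\lambda_1=-(n-2)$ for a Clifford cone. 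Where you genuinely diverge is in the finishing mechanism. The paper keeps the boundary and applies the Fischer--Colbrie criterion (Lemma \ref{LemFC}) with the explicit positive supersolution built from $v(r)=\bigl(\tfrac{2r^{n-2}}{m+2r^{n-2}}\bigr)^{1/(n-2)}$ of Lemma \ref{LemV}, checking the Neumann condition at the horizon directly; you instead double across the horizon via the inversion isometry $\Phi(x)=R_0^2x/|x|^2$ and then bound each radial mode sharply by recognizing the P\"oschl--Teller potential with $\mu=\tfrac1{n-2}$, $\alpha=\tfrac{n-2}{2}$ and factorizing it. These are two faces of the same fact: the paper's $v$ is (after your substitutions) exactly the P\"oschl--Teller ground state annihilated by your first-order factor, which is why both arguments pin the same $n\ge 8$ threshold. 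Your route buys a sharp spectral lower bound (hence strict stability for $n\ge 9$ and the identification of the marginal radial Jacobi field at $n=8$) and a transparent explanation of the coincidence; the paper's route is more elementary (one differentiates $v$ twice) and feeds directly into the general criterion of Theorem \ref{newMAIN}, which your computation also yields implicitly. Two small points you should make explicit: the even extension across the horizon of an admissible free boundary test function is only Lipschitz, so you need a density-in-$H^1$ remark before invoking nonnegativity of the doubled form on compactly supported functions; and the justification of your intermediate formula for $Q(f)$ rests on $\overline{{\rm Ric}}(\xi,\xi)=(n-2)m\,s^{-n}$ with $s=u^{2/(n-2)}|x|$ (the normal to the cone being tangent to the coordinate spheres), which is worth recording since, as you say, every coefficient must come out exactly for the threshold to land at $n=8$.
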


We even show a more general result that we describe now. Let 
$$C_{\Gamma}:=\{ \lambda y \, ; \,y \in \Gamma^{n-2}, \,\lambda\in (0,\infty)\}$$be a cone in $\mathbb{R}^{n}$, $n\geq 4$, with vertex at the origin, here $\Gamma^{n-2}$ is an embedded closed orientable minimal hypersurface in $\mathbb{S}^{n-1}$. We refer to $C_{\Gamma}$ as the {\it minimal cone over ${\Gamma}$}. Finally, consider $\Sigma_{\Gamma}=\{x\in C_{\Gamma}\,;\,\, \,\,|x|\geq R_{0} \}$. This  hypersurface is a properly embedded free boundary minimal hypersurface in $M^n$. Note that $\Sigma_0=\Sigma_{\Gamma_0}$ when $\Gamma_0$ is an equator in $\mathbb{S}^{n-1}$. Hence, we show:

\begin{theorem}\label{main2}
The Morse index of the hypersurface $\Sigma_{\Gamma} \subset (M^{n},g_{Sch})$, $4\leq n\leq7$, up to a space rotation, is finite if, and only if, $\Gamma$ is totally geodesic in $\mathbb{S}^{n-1}$ (i.e., $\Sigma_{\Gamma}=\Sigma_0$); in particular, the Morse index is zero.
\end{theorem}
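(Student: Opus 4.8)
\emph{The easy implication first.} If $\Gamma$ is totally geodesic in $\mathbb{S}^{n-1}$ it is an equator, $C_\Gamma$ is a hyperplane through the origin, and $\Sigma_\Gamma=\Sigma_0$ up to a space rotation; hence its Morse index is zero by Theorem \ref{main1}, which applies for all $n\geq4$. So everything is in the converse: assuming $\Gamma\subset\mathbb{S}^{n-1}$ is a closed embedded minimal hypersurface that is \emph{not} totally geodesic and $4\leq n\leq7$, I want to show that the Morse index of $\Sigma_\Gamma$ is infinite. Since the horizon $\partial M=S_0$ is totally geodesic in $(M^n,g_{Sch})$, the index form of the free boundary minimal hypersurface $\Sigma_\Gamma$ carries no boundary term: $Q_{Sch}(\phi)=\int_{\Sigma_\Gamma}\big(|\nabla^{g_{Sch}}\phi|^2-V\phi^2\big)\,dA_{g_{Sch}}$ with $V=|A_{\Sigma_\Gamma}|^2_{g_{Sch}}+\mathrm{Ric}_{g_{Sch}}(\nu,\nu)$, and it suffices to produce, for every $k$, a $k$-dimensional space of functions vanishing near $\partial M$ on which $Q_{Sch}$ is negative definite.

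The plan is to localize near infinity, where $g_{Sch}=u^{4/(n-2)}\delta$ with $u=1+\tfrac m2|x|^{2-n}\to1$ and all derivatives decaying. Two facts make the potential on $\Sigma_\Gamma$ essentially that of the Euclidean minimal cone $C_\Gamma$: first, because $C_\Gamma$ is a cone the position vector is tangent to it, so the radial gradient of the conformal factor has no normal component, whence $C_\Gamma$ stays minimal in $g_{Sch}$ and $|A_{\Sigma_\Gamma}|^2_{g_{Sch}}=u^{-4/(n-2)}|A_\Gamma|^2_{\mathbb{S}^{n-1}}(\theta)/r^2$, writing $r=|x|$, $\theta=x/|x|\in\Gamma$; second, $g_{Sch}-\delta=O(r^{2-n})$ forces $|\mathrm{Ric}_{g_{Sch}}(\nu,\nu)|\leq C r^{-n}$ for $r$ large, which decays strictly faster than the scale-critical rate $r^{-2}$. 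Using $|\nabla^{g_{Sch}}\phi|^2\,dA_{g_{Sch}}=u^{2(n-3)/(n-2)}|\nabla\phi|^2_\delta\,dA_\delta$ together with the separation of variables $\phi=f(r)\psi(\theta)$, where $\psi$ is a first eigenfunction of $-\Delta_\Gamma-|A_\Gamma|^2$ on $\Gamma$ with eigenvalue $\lambda_1^\Gamma$ and $\int_\Gamma\psi^2=1$, a short computation gives, for test functions supported in $\{r\geq\rho\}$,
\[
Q_{Sch}(f\psi)=\int_\rho^\infty u^{\frac{2(n-3)}{n-2}}\Big(f'(r)^2+\lambda_1^\Gamma\,\frac{f(r)^2}{r^2}\Big)r^{n-2}\,dr+\mathcal E,\qquad |\mathcal E|\leq C\rho^{-(n-2)}\!\int_\rho^\infty\frac{f^2}{r^2}\,r^{n-2}\,dr .
\]

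Now the key input: by Simons' classical result the only stable minimal hypercones in $\mathbb{R}^n$ for $4\leq n\leq7$ are the hyperplanes, and since stability of $C_\Gamma$ is equivalent—applying modewise the sharp Hardy inequality $\int_0^\infty f'^2 r^{n-2}dr\geq\big(\tfrac{n-3}{2}\big)^2\int_0^\infty\tfrac{f^2}{r^2}r^{n-2}dr$—to $\lambda_1^\Gamma\geq-\big(\tfrac{n-3}{2}\big)^2$, a non-totally-geodesic $\Gamma$ forces the \emph{strict} inequality $\lambda_1^\Gamma<-\big(\tfrac{n-3}{2}\big)^2$. Fix $\rho$ large enough that $u^{2(n-3)/(n-2)}\leq1+\epsilon$ on $\{r\geq\rho\}$ and that $(1+\epsilon)\big(\tfrac{n-3}{2}\big)^2+C\rho^{-(n-2)}<-\lambda_1^\Gamma$; this is possible because $u\to1$, $\rho^{-(n-2)}\to0$ and the last inequality is strict. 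Then choose $f_0\in C_c^\infty(1,2^N)$, for $N$ large, whose Hardy ratio $\int f_0'^2 s^{n-2}ds\,/\int\tfrac{f_0^2}{s^2}s^{n-2}ds$ is close enough to $\big(\tfrac{n-3}{2}\big)^2$ that $Q_{Sch}(f_0(\cdot/\rho)\psi)<0$. Since the Hardy ratio is scale invariant, the rescalings $f_j(r)=f_0\!\big(r/(\rho\,2^{jN})\big)$, $j\geq0$, supported in the pairwise disjoint annuli $\{\rho\,2^{jN}\leq r\leq\rho\,2^{(j+1)N}\}\subset\{r\geq\rho\}$, each satisfy $Q_{Sch}(f_j\psi)<0$; having disjoint supports, the $f_j\psi$ span an infinite-dimensional subspace on which $Q_{Sch}$ is negative definite, and all of them vanish near the horizon, hence are admissible. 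Therefore the Morse index of $\Sigma_\Gamma$ is infinite.

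I expect the main obstacle to be the quantitative heart of paragraph two: verifying that the \emph{strict} Hardy deficit coming from Simons' instability of low-dimensional minimal cones genuinely survives the Schwarzschild perturbation on the end. This rests on the two decay facts (the conformal factor tending to $1$ and, crucially, $\mathrm{Ric}_{g_{Sch}}(\nu,\nu)=O(r^{-n})$ decaying faster than the critical rate $r^{-2}$) together with the identity $|A_{\Sigma_\Gamma}|^2_{g_{Sch}}=u^{-4/(n-2)}|A_\Gamma|^2/r^2$, which in turn relies on the position vector being tangent to the cone. Once that estimate is in hand, the separation of variables and the dyadic-scaling production of infinitely many disjointly supported negative directions are routine.
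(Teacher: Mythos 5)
Your proposal is correct, and for the substantive direction (non--totally geodesic $\Gamma$ $\Rightarrow$ infinite index) it takes a genuinely different route from the paper. The paper first proves an \emph{exact} conformal identity between the Schwarzschild and Euclidean index forms on the cone (Lemmas \ref{LemRelation} and \ref{LemRelation2}), in which the Schwarzschild correction term enters with a favorable sign, so $Q_{\Sigma}(R)$ is dominated by the Euclidean form with no smallness argument; it then plugs in Simons' explicit Dirichlet eigenbasis $\psi_j=f_1(p)g_j(r)$ on the truncated cone $\Sigma(R)$, uses the classical bound $\lambda_1(\Gamma)\leq-(n-2)$ together with $-(n-2)+\left(\tfrac{n-3}{2}\right)^2<0$ for $n\leq 7$, lets $R\to+\infty$ so that $\beta_j\to\left(\tfrac{n-3}{2}\right)^2$, and converts the Dirichlet count into the constrained index via Claim B. You instead work directly with the Schwarzschild quadratic form on the end, using only asymptotic flatness (conformal factor $\to 1$ and $\mathrm{Ric}_{g_{Sch}}(\nu,\nu)=O(r^{-n})$, subcritical relative to $r^{-2}$), obtain the strict spectral deficit $\lambda_1(\Gamma)<-\left(\tfrac{n-3}{2}\right)^2$ from Simons' instability of low-dimensional cones via the sharp Hardy equivalence, and produce infinitely many negative directions by disjointly supported dyadic log-cutoffs; this is the same underlying spectral fact about $\Gamma$ as in the paper, but your argument is more robust (it would apply to any metric on the end with these decay rates, not just the exact Schwarzschild conformal structure) and makes the infinite dimensionality transparent through disjoint supports, whereas the paper's exact identity avoids all decay estimates and yields the quantitative term $G_j(R)$ exploited elsewhere (e.g.\ for stability annuli). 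Two small points to tighten: the bound on $\mathcal{E}$ should be justified from the decay of the \emph{derivatives} of the explicit conformal factor (metric decay alone does not control Ricci), and in the final estimate you should note $u\geq1$ (or lose an extra $(1-\epsilon)$) so that the conformal weight multiplying the negative $\lambda_1^{\Gamma}$ term can be discarded; both are immediate for the explicit Schwarzschild factor.
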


\begin{remark}
Using the techniques of Theorems \ref{main1a} and \ref{main2}, one is able to obtain an upper bound for the maximal annular domain of stability of $\Sigma _\Gamma$ as in \cite[Theorem 1.2]{RaMo}. However, in our case, due to the lack of an explicit solution we are not able to compute the sharp one. 
\end{remark}

As we can see in Theorem \ref{main2}, all non-totally geodesic cones have infinite Morse index when $4\leq n\leq7$. However, for every dimension $n\geq8$, we find non-totally geodesic cones with finite index (in fact, these cones are stable), and infinitely many other cones with infinite Morse index.  This is a consequence of a link between the stability of a general minimal cone $\Sigma_\Gamma$ and the first eigenvalue of the Jacobi operator of $\Gamma \subset \mathbb{S}^{n-1}$. 

\begin{theorem}\label{newMAIN}
Let $\Sigma_{\Gamma}$ a cone in the $n$-dimensional Riemannnian Schwarzschild space $M^n$, $n\geq4$. If 
\[
4\lambda_1(\Gamma)+(n-2)(n-4)\geq0 ,
\]
where $\lambda_1(\Gamma)$ is the first eigenvalue of the Jacobi operator of $\Gamma$ in $\mathbb{S}^{n-1}$, then $\Sigma_{\Gamma}$ is stable.
\end{theorem}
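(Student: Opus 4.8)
The plan is to reduce stability to a one–dimensional inequality by a conformal change plus separation of variables, and then to close it with an explicit positive supersolution of the radial Jacobi operator.

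Since the horizon $S_0=\partial M$ is totally geodesic, the boundary term in the second variation of the area of $\Sigma_\Gamma$ (for variations tangential along $\partial M$) vanishes, so $\Sigma_\Gamma$ is stable exactly when
\[
Q(\phi):=\int_{\Sigma_\Gamma}\Big(|\nabla\phi|^2-\big(|A|^2+\operatorname{Ric}(N,N)\big)\phi^2\Big)\,dA\ \ge\ 0\qquad\text{for all }\phi\in C^\infty_c(\Sigma_\Gamma),
\]
with $N,A,\nabla,\operatorname{Ric},dA$ taken with respect to $g_{Sch}$. Writing $u:=1+\tfrac{m}{2|x|^{n-2}}$, so $g_{Sch}=u^{4/(n-2)}\delta$, one uses that $u$ is harmonic on $\mathbb{R}^n\setminus\{0\}$ (equivalently $g_{Sch}$ is scalar-flat) and that $u(R_0)=2$, $u'(R_0)=-(n-2)/R_0$. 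Because $C_\Gamma$ is a Euclidean cone with vertex at the origin its Euclidean unit normal is orthogonal to the radial direction, so the radial derivative of $u$ along it vanishes; this is exactly why $\Sigma_\Gamma$ is $g_{Sch}$–minimal, and it makes the conformal transformation rules for the shape operator and for $\operatorname{Ric}(N,N)$ especially simple. Carrying these out and collapsing the resulting $O(m)$ and $O(m^2)$ terms via the harmonicity of $u$, one gets on $\Sigma_\Gamma$ that $|A|^2=u^{-4/(n-2)}\tfrac{|A_\Gamma|^2}{|x|^2}$ and $\operatorname{Ric}(N,N)=u^{-4/(n-2)}\tfrac{(n-2)m}{u^2|x|^{n}}$, where $|A_\Gamma|^2$ is the squared norm of the second fundamental form of $\Gamma\subset\mathbb{S}^{n-1}$. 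Absorbing the conformal factors into $dA$ and $|\nabla\phi|^2$ rewrites $Q$ as a weighted integral over $C_\Gamma\cap\{r\ge R_0\}$ with the flat cone metric $dr^2+r^2g_\Gamma$, $r=|x|$.

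Next I would separate variables: writing $\phi=\psi(r)\eta(\theta)$ with $\eta$ an eigenfunction of $-\Delta_\Gamma-|A_\Gamma|^2$ on $\Gamma$ (smallest eigenvalue $\lambda_1(\Gamma)$, all others larger), the form diagonalizes and it suffices that
\[
Q_\mu(\psi)=\int_{R_0}^{\infty}u^{p}r^{n-2}\Big((\psi')^2+q_\mu(r)\psi^2\Big)\,dr\ \ge\ 0,\qquad p=\tfrac{2(n-3)}{n-2},\ \ q_\mu(r)=\tfrac{\mu}{r^2}-\tfrac{(n-2)m}{u^2r^{n}},
\]
for $\mu=\lambda_1(\Gamma)$ (larger $\mu$ only raise $q_\mu$). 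The key step is to exhibit a positive supersolution of the radial Jacobi operator $L_\mu\psi:=-r^{2-n}u^{-p}\big(u^pr^{n-2}\psi'\big)'+q_\mu\psi$ that satisfies the Neumann condition at $r=R_0$ forced by the free boundary. I claim $\psi_*(r)=\dfrac{1}{u(r)\,r^{(n-2)/2}}$ does the job: it is positive on $[R_0,\infty)$, one checks (using $u(R_0)=2$, $u'(R_0)=-(n-2)/R_0$) that $\psi_*'(R_0)=0$, and a direct computation exploiting the harmonicity of $u$ gives the exact identity
\[
L_\mu\psi_*=\frac{4\mu+(n-2)(n-4)}{4r^{2}}\,\psi_*.
\]
Under the hypothesis $4\lambda_1(\Gamma)+(n-2)(n-4)\ge0$ the right-hand side is $\ge0$ for $\mu=\lambda_1(\Gamma)$, hence for every eigenvalue. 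Writing $\phi=\psi_*\,\xi$ and integrating by parts, the boundary contribution $-u^pR_0^{\,n-2}\psi_*(R_0)\psi_*'(R_0)\,\xi(R_0)^2$ vanishes and what remains is $\int u^pr^{n-2}\psi_*^2(\xi')^2+\int\tfrac{4\mu+(n-2)(n-4)}{4r^{2}}u^pr^{n-2}\psi_*^2\xi^2\ge0$; summing over modes gives $Q(\phi)\ge0$.

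The conceptual skeleton is routine; the work, and the main obstacle, is twofold. First, the conformal curvature computation: one must handle the transformation of $|A|^2+\operatorname{Ric}(N,N)$ carefully and use $\Delta_{\mathbb{R}^n}u=0$ to reduce it to the single clean potential $-\tfrac{(n-2)m}{u^2r^n}$. Second, and more delicate, is guessing $\psi_*$ and checking that the remainder is \emph{exactly} $\tfrac14\big(4\mu+(n-2)(n-4)\big)r^{-2}$ — so that the eigenvalue inequality in the statement is precisely the threshold that makes the argument close — while respecting the Neumann condition at $r=R_0$, which is what pins down the power $r^{-(n-2)/2}$ together with the factor $u^{-1}$. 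A convenient way to discover $\psi_*$ is the substitution $\psi=(u^pr^{n-2})^{-1/2}\chi$, which produces a manifestly nonnegative boundary term $\tfrac{1}{2R_0}\chi(R_0)^2$ (because $\big(\log(u^pr^{n-2})\big)'(R_0)=1/R_0$) and in whose gauge one only has to notice that $u^{-1/(n-2)}$ solves $-\varphi''=\tfrac{(n-1)m}{2u^2r^n}\varphi$ on $[R_0,\infty)$; this last verification is the real computational crux.
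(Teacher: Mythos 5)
Your proposal is correct and follows essentially the same route as the paper: after the conformal reduction to the Euclidean cone and separation of variables, your radial supersolution $\psi_*=u^{-1}r^{-(n-2)/2}=\frac{2r^{(n-2)/2}}{m+2r^{n-2}}$ is exactly the paper's function $F^{-1}(r)\,r^{-\frac{N-1}{2}}v(r)$ with $v$ from Lemma \ref{LemV}, your exact remainder $\frac{4\mu+(n-2)(n-4)}{4r^2}$ is precisely the paper's potential $W_k$ in \eqref{Wk}, and the Neumann condition at $r=R_0$ is the same computation using $2R_0^{n-2}=m$. The only cosmetic differences are that you compute $\operatorname{Ric}(N,N)$ directly under the conformal change (keeping the weight $u^{2(n-3)/(n-2)}$ in the measure) instead of routing the bookkeeping through the conformal Laplacian as in Lemma \ref{LemRelation}, and you conclude by the mode-by-mode ground-state substitution rather than by citing the Fischer--Colbrie criterion (Lemma \ref{LemFC}).
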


\begin{theorem}\label{main3}
For every $n\geq4$, there exist infinitely many non-compact free boundary minimal  hypersurfaces in the $n$-dimensional Riemannnian Schwarzschild space, which are not congruent to each other, with infinite Morse index.
\end{theorem}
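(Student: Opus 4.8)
Let me think about how to prove that for every $n \geq 4$, there are infinitely many non-congruent non-compact free boundary minimal hypersurfaces with infinite Morse index.

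The natural candidates are the cones $\Sigma_\Gamma$ constructed above. The key link is Theorem \ref{newMAIN}, but I need the converse-ish direction: when $4\lambda_1(\Gamma) + (n-2)(n-4) < 0$, the cone $\Sigma_\Gamma$ should have infinite Morse index. Actually for $4 \leq n \leq 7$, Theorem \ref{main2} already says that any non-totally-geodesic $\Sigma_\Gamma$ has infinite Morse index — and there are infinitely many non-congruent closed embedded minimal hypersurfaces $\Gamma \subset \mathbb{S}^{n-1}$ (e.g., the Clifford-type hypersurfaces, or minimal hypersurfaces of distinct topological types by Lawson-type constructions), giving infinitely many non-congruent $\Sigma_\Gamma$. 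So dimensions $4 \leq n \leq 7$ are immediate from Theorem \ref{main2} plus the existence of infinitely many $\Gamma$'s.

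For $n \geq 8$, I need to handle the case where some cones are stable. Here the plan is: the Morse index of $\Sigma_\Gamma$ is governed by the spectrum of the Jacobi operator, which on a cone separates into radial and spherical parts. Concretely, writing the second variation of $\Sigma_\Gamma$ and separating variables $\varphi(r,\theta) = f(r)\phi(\theta)$ with $\phi$ an eigenfunction of the Jacobi operator $J_\Gamma$ on $\Gamma$ with eigenvalue $\mu$, one reduces to a one-dimensional problem on $r \in [R_0, \infty)$ with a potential whose coefficient involves $\mu$ and the warping. The index becomes infinite exactly when, for some eigenvalue $\mu_k$ of $J_\Gamma$, the associated one-dimensional operator has infinitely many negative eigenvalues — which happens when the effective potential at infinity is "too negative," i.e., of Hardy type with subcritical constant. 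Since the metric $g_{Sch}$ is asymptotically Euclidean, at infinity the cone $\Sigma_\Gamma$ looks like the Euclidean cone $C_\Gamma$, whose stability is governed precisely by $4\mu_k + (n-2)(n-4) \geq 0$ for all $k$ (this is the Simons-type criterion). So if $\Gamma$ is chosen with $\lambda_1(\Gamma)$ sufficiently negative — which is possible by taking $\Gamma$ to be, say, a minimal hypersurface with large area or with many "necks" (doublings of the equator, or high-degree Lawson hypersurfaces in $\mathbb{S}^8$ and beyond, whose first Jacobi eigenvalue tends to $-\infty$) — then $4\lambda_1(\Gamma) + (n-2)(n-4) < 0$ strictly, and a Hardy-inequality argument on the end of $\Sigma_\Gamma$ produces a sequence of compactly supported test functions with negative second variation and disjoint supports, hence infinite Morse index. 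Producing infinitely many non-congruent such $\Gamma$ uses that there are infinitely many topologically distinct embedded closed minimal hypersurfaces in $\mathbb{S}^{n-1}$ with first Jacobi eigenvalue below any prescribed threshold.

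The plan, in order: (i) recall the explicit form of the second variation / Jacobi operator of a free boundary cone $\Sigma_\Gamma$ in $(M^n, g_{Sch})$, with Robin (free boundary) condition at $r = R_0$; (ii) perform separation of variables against eigenfunctions of $J_\Gamma$, obtaining for each Jacobi eigenvalue $\mu$ a Sturm–Liouville problem on $[R_0,\infty)$; (iii) analyze the asymptotics: since $g_{Sch} \to \delta$, the tail operator coincides with the Euclidean-cone Jacobi operator $-\Delta_{C_\Gamma}$, whose radial reduction has the Hardy-type potential $-\big(\mu + \tfrac{(n-2)^2}{4} - \big(\tfrac{n-3}{2}\big)^2\big)r^{-2} = -\tfrac{1}{4}\big(4\mu + (n-2)(n-4)\big)r^{-2}$ — wait, let me just say the relevant constant is $4\mu + (n-2)(n-4)$; (iv) invoke the classical fact that $-\frac{d^2}{dt^2} - \frac{c}{4t^2}$ on a half-line has infinitely many negative eigenvalues iff $c > 1$ (oscillation criterion), so when $4\lambda_1(\Gamma) + (n-2)(n-4) < 0$ one gets infinitely many negative directions; (v) build explicit logarithmic-cutoff test functions $f_j$ supported on dyadic annuli $\{2^j \le r \le 2^{j+1}\}$, compute $Q(f_j \phi_1) < 0$ for $j$ large using the asymptotic flatness estimates, and note they have pairwise disjoint support, giving infinite index; (vi) exhibit, for each $n \geq 4$, an infinite family $\{\Gamma_k\}$ of pairwise non-congruent embedded closed minimal hypersurfaces in $\mathbb{S}^{n-1}$ with $\lambda_1(\Gamma_k) \to -\infty$ (for $4 \le n \le 7$ any non-equatorial family works by Theorem \ref{main2}; for $n \ge 8$ use area-increasing or topology-increasing families — e.g., iterated doublings of the equator, whose first Jacobi eigenvalue diverges to $-\infty$), so that $\Sigma_{\Gamma_k}$ are pairwise non-congruent (distinguished by the link $\Gamma_k$ at infinity) and each has infinite Morse index.

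The main obstacle I expect is step (vi) — producing, for every dimension $n \geq 8$, an explicit infinite family of pairwise non-congruent closed embedded minimal hypersurfaces of $\mathbb{S}^{n-1}$ with first Jacobi eigenvalue tending to $-\infty$ (and verifying the non-congruence of the resulting cones, which should follow from the uniqueness of the tangent cone at infinity). The analytic core, steps (ii)–(v), is essentially the same Sturm–Liouville/Hardy analysis that already underlies Theorems \ref{main2} and \ref{newMAIN}, so it can be imported with only the asymptotic-flatness error terms needing care; the geometric input of step (vi) is where the real work lies, though in low dimensions $4 \le n \le 7$ it is bypassed entirely by Theorem \ref{main2}.
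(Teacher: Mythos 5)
Your overall strategy coincides with the paper's: the printed proof of Theorem \ref{main3} is literally the one sentence that it follows from Theorem \ref{main2}, i.e.\ from the cone construction and the computation $Q_{\Sigma}(R)(F^{-1}\psi_j,F^{-1}\psi_j)=\lambda_1+\left(\frac{n-3}{2}\right)^2+G_j(R)$ with $G_j(R)\to0$, and your first paragraph is exactly that argument for $4\le n\le7$. Your elaboration for $n\ge8$ (separation of variables plus an oscillation/Hardy analysis applied to links $\Gamma$ with very negative first eigenvalue) is the natural reading of what the authors intend but do not write, so in that respect your plan is more explicit than the paper itself.

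Two concrete issues remain. First, your threshold in step (iv) is off by the Hardy constant: with $v=r^{(N-1)/2}u$ the radial potential is \eqref{Wk}, whose leading term at infinity is $-\frac{4\lambda_k(\Gamma)+(n-2)(n-4)}{4r^2}$, so your own oscillation criterion ($c>1$) demands $4\lambda_1(\Gamma)+(n-2)(n-4)<-1$, equivalently $\lambda_1(\Gamma)+\left(\frac{n-3}{2}\right)^2<0$, which is precisely the quantity driving the paper's proof of Theorem \ref{main2}. In the band $-1\le 4\lambda_1+(n-2)(n-4)<0$ the Hardy inequality makes the limiting radial form nonnegative and the Schwarzschild correction term in \eqref{Wk} decays like $r^{-n}$, so no infinite index can be extracted; hence your assertion that ``$4\lambda_1+(n-2)(n-4)<0$ gives infinitely many negative directions'' is false as stated (harmless for your construction only because you then send $\lambda_1(\Gamma_k)\to-\infty$). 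Second, the genuine gap --- which you flag yourself --- is step (vi): for every $n\ge8$ you must exhibit infinitely many pairwise non-congruent closed embedded minimal hypersurfaces of $\mathbb{S}^{n-1}$ with $\lambda_1$ below the threshold, and the examples you invoke are either unavailable as stated (Lawson's construction lives in $\mathbb{S}^3$) or need substantive citations (equatorial doublings in all dimensions, plus an argument that $\lambda_1\to-\infty$ along the family, e.g.\ via the constant test function and $\lambda_1\le -|\Gamma|^{-1}\int_\Gamma|A_\Gamma|^2$). Even for $5\le n\le7$ the ``infinitely many non-congruent $\Gamma$'' claim needs a source (Hsiang-type equivariant examples or min-max existence), since Clifford tori give only finitely many per dimension. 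To be fair, the paper supplies none of this either --- its one-line proof strictly covers only the range of Theorem \ref{main2} and is silent both on the infinitude of links and on $n\ge8$ --- so your proposal is at least as complete as the printed argument, but as a self-contained proof it still rests on this unproved geometric input.
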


\begin{remark}
It would be interesting to study the existence or not of properly embedded free boundary minimal hypersurfaces in the Riemannnian Schwarzschild space, $n\geq 4$, with Morse index one. A good candidate might be to construct rotationally symmetric catenoidal type minimal hypersurfaces. 
\end{remark}

Finally, we consider the density at infinity of a non-compact free boundary minimal  hypersurface $\Sigma$ with respect to a minimal cone $\Sigma_{\Gamma}$.  We obtain an inequality which relates the area of the boundary $\partial\Sigma$ of $\Sigma$ and the
density at infinity $\Theta_{\Gamma}(\Sigma)$. In fact, we show that the ratio between the area of the boundary of $\partial\Sigma$ and the area of $\Gamma$ is a lower bound for the density at infinity. This is the content of the next result.

\begin{theorem}\label{Density}
The area of the boundary of $\Sigma$ satisfies
\begin{equation}\label{CorEqualityA}
area(\partial \Sigma)\leq 2m|\Gamma|\Theta_{\Gamma}(\Sigma)\,.
\end{equation}
Moreover, equality holds if and only if $\Sigma$ is a minimal cone; in such case
$$ |\Gamma| \Theta_{\Gamma}(\Sigma)  = |R_0 ^{-1} \partial \Sigma|,$$ 
where $R_0 ^{-1}\partial\Sigma \subset \mathbb{S}^{n-1}$ is nothing but the dilation of $\partial \Sigma$ into the $(n-1)-$sphere of radius one.
\end{theorem}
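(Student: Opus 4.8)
The strategy is to exploit the conformal relation between $g_{Sch}$ and the Euclidean metric $\delta$ together with a monotonicity-type argument for minimal hypersurfaces. Write the conformal factor as $u(x) = \left(1+\frac{m}{2|x|^{n-2}}\right)^{\frac{2}{n-2}}$, so that $g_{Sch} = u^2\,\delta$. The area element of $\Sigma$ in the metric $g_{Sch}$ is $u^{n-1}$ times the Euclidean one. I would first recall the definition of the density at infinity $\Theta_\Gamma(\Sigma)$: since $u(x)\to 1$ as $|x|\to\infty$, the metric $g_{Sch}$ is asymptotically Euclidean, and $\Theta_\Gamma(\Sigma)$ should be defined as the limit of the ratio $\frac{\mathrm{area}_\delta(\Sigma\cap B_r)}{\mathrm{area}_\delta(C_\Gamma\cap B_r)} = \frac{\mathrm{area}_\delta(\Sigma\cap B_r)}{\frac{|\Gamma|}{n-1}r^{n-1}}$ as $r\to\infty$. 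Because $\Sigma$ is a free boundary minimal hypersurface meeting the horizon $S_0$ orthogonally, and because $g_{Sch}$ is scalar-flat (it is exactly the condition that makes $u$ harmonic-like), the key point is that $\Sigma$ is Euclidean-minimal away from nothing — actually $\Sigma$ is minimal for $g_{Sch}$, not for $\delta$ — so I must instead use the first variation in $g_{Sch}$.

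The heart of the argument is the monotonicity formula. The plan is to apply the first variation formula to $\Sigma$ in $(M^n, g_{Sch})$ against the conformal position vector field, or more efficiently, to pass to the cone $C_\Gamma$ directly when equality is suspected. Concretely: I would integrate the identity $\mathrm{div}_\Sigma X = 0$ (mean curvature zero) for a suitable radial vector field $X = \phi(|x|)\,x$ on the region $\Sigma_r := \Sigma \cap \{R_0 \le |x| \le r\}$. The boundary terms split into (i) the horizon part $\partial\Sigma \subset S_0$, where the free boundary/orthogonality condition forces $X$ to be tangent to $\Sigma$ and normal to $\partial M$, producing a term proportional to $\mathrm{area}_\delta(\partial\Sigma)$ weighted by $u(R_0)^{n-1}$ and $R_0$; and (ii) the outer sphere part, which after dividing by $r^{n-1}$ and letting $r\to\infty$ converges to $(n-1)\Theta_\Gamma(\Sigma)\cdot\frac{|\Gamma|}{n-1} = |\Gamma|\Theta_\Gamma(\Sigma)$, up to the normalization built into the definition of $\Theta_\Gamma$. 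Choosing $\phi$ so that $\phi(|x|)x$ is exactly the $g_{Sch}$-gradient of $\frac{1}{2}\,\mathrm{dist}_{g_{Sch}}$-type quantity makes the interior integral have a sign; the scalar-flatness of $g_{Sch}$ (equivalently, $u$ solving the right elliptic equation) is what guarantees the interior term is monotone and nonnegative, yielding the inequality $\mathrm{area}_\delta(\partial\Sigma)\cdot(\text{const}) \le 2m|\Gamma|\Theta_\Gamma(\Sigma)$. A direct computation of the constant, using $R_0^{n-2} = m/2$ and $u(R_0) = 2^{2/(n-2)}$, should produce exactly the factor $2m$ in \eqref{CorEqualityA}.

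For the equality case, the monotonicity argument shows that equality forces the interior integrand to vanish identically on all of $\Sigma$, which means the radial vector field $x$ is everywhere tangent to $\Sigma$; this is precisely the statement that $\Sigma$ is a cone with vertex at the origin, hence $\Sigma = \Sigma_{\Gamma'}$ for some minimal $\Gamma' \subset \mathbb{S}^{n-1}$. Once $\Sigma$ is a cone, the density and the boundary area are computed explicitly: $\partial\Sigma = R_0\,\Gamma'$ (the dilation of $\Gamma'$ to radius $R_0$), so $\mathrm{area}_\delta(\partial\Sigma) = R_0^{n-2}|\Gamma'| = \frac{m}{2}|\Gamma'|$, and the weighting by $u(R_0)^{n-1}$ together with $\Theta_\Gamma(\Sigma_{\Gamma'}) = |\Gamma'|/|\Gamma|$ recovers $|\Gamma|\Theta_\Gamma(\Sigma) = |R_0^{-1}\partial\Sigma|$, as claimed.

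The main obstacle I anticipate is twofold: first, getting the monotone quantity exactly right in the conformally-Euclidean-but-not-scalar-flat-looking setting — one must verify that the natural radial vector field produces an interior term with a definite sign, which will hinge on the precise ODE satisfied by $u$ and on $n \ge 3$; and second, justifying the existence and finiteness of the limit defining $\Theta_\Gamma(\Sigma)$ (a priori $\Sigma$ could have wild behavior at infinity), which likely requires an independent monotonicity/density bound for the Euclidean area ratio, perhaps invoking that $\Sigma$ is asymptotically minimal in $(\mathbb{R}^n,\delta)$ since $u\to 1$ with the right decay rate. If the density at infinity is defined from the start as a limit of a monotone quantity, this second issue dissolves and the whole proof reduces to the single integration by parts above.
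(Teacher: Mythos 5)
Your plan is essentially the paper's proof: the paper establishes the identity $\Theta_\Gamma(\Sigma)=\frac{area(\partial\Sigma)}{2m|\Gamma|}+\frac{n-1}{|\Gamma|}\int_\Sigma \frac{f}{h^{n-1}(r)}\,|\partial_r^{\perp}|^2\,dv$ by integrating the tangential divergence of a truncated radial (conformal) vector field over $\Sigma$, using minimality and the free boundary condition to evaluate the horizon term --- exactly the first-variation/monotonicity scheme you outline, except that the paper works in the warped-product coordinates $g_{Sch}=dr^2+h(r)^2 g_{\mathbb{S}^{n-1}}$ with Brendle's field $h(r)\partial_r$ cut off by $h^{1-n}$ (rather than your conformally flat radial field), and it assumes the limit defining $\Theta_\Gamma(\Sigma)$ exists instead of re-deriving it. The rigidity discussion coincides with yours: equality forces the nonnegative interior term to vanish, i.e. $\partial_r$ is everywhere tangent to $\Sigma$, so $\Sigma$ is a cone, and the computation $area(\partial\Sigma)=2m\,|R_0^{-1}\partial\Sigma|$ (note the weight on the $(n-2)$-dimensional boundary is $u(R_0)^{n-2}=4$, not $u(R_0)^{n-1}$) yields $|\Gamma|\Theta_\Gamma(\Sigma)=|R_0^{-1}\partial\Sigma|$.
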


For the three-dimensional case, there is only one free boundary minimal cone, namely $\Sigma_0$. The above theorem, in the case $n=3$, was obtained by R. Montezuma \cite{RaMo} with the rigidity part being exactly $\Sigma_0$. In our case, for dimension $n\geq4$, we might consider different minimal cones and, hence, different densities at infinity with respect to those cones. Therefore, the rigidity part depends on the minimal hypersurface $\Gamma$ used in the construction of the cone and the density which is being considered. However, we can use the recent resolution of the Willmore conjecture due to F. Marques and A. Neves \cite{MaNe} to obtain a more specific rigidity result when $n=4$:
 
\begin{theorem}\label{ThMN}
Let $\Sigma$ be a properly embedded free boundary minimal hypersurface in $(M^4, g_{Sch})$. Assume that there exists an embedded minimal hypersurface $\Gamma \subset \mathbb{S}^{3}$ so that achieves the equality in \eqref{CorEqualityA} and $ |\Gamma| \Theta _\Gamma (\Sigma ) \leq 2\pi ^2 $. Then, up to a rotation, either $\Sigma \equiv \Sigma_0$ or $\Sigma$ is a minimal cone over a Clifford torus.
\end{theorem}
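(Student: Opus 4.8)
The plan is to reduce the statement, via the rigidity part of Theorem \ref{Density}, to the classification of closed embedded minimal surfaces of small area in $\mathbb{S}^3$, and then to quote the resolution of the Willmore conjecture by Marques and Neves \cite{MaNe}. Since equality is assumed in \eqref{CorEqualityA}, Theorem \ref{Density} forces $\Sigma$ to be a minimal cone; writing $\Gamma' := R_0^{-1}\,\partial\Sigma$, this means $\Sigma = \Sigma_{\Gamma'}$ with $\Gamma' \subset \mathbb{S}^{3}$ a closed embedded (orientable) minimal surface — the link of the cone — and it gives the area identity $|\Gamma|\,\Theta_{\Gamma}(\Sigma) = |\Gamma'|$. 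Combined with the hypothesis $|\Gamma|\,\Theta_{\Gamma}(\Sigma)\leq 2\pi^{2}$, this yields the sharp area bound $|\Gamma'|\leq 2\pi^{2}$, and the whole theorem becomes a statement about which $\Gamma'\subset\mathbb{S}^{3}$ can occur.

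Next I would organise the classification of $\Gamma'$ by connectedness and genus. Any closed embedded minimal surface in $\mathbb{S}^{3}$ has area at least $4\pi$: in genus $0$ it is a totally geodesic $2$-sphere of area exactly $4\pi$ by Almgren's theorem, and in positive genus it has area at least $2\pi^{2}>4\pi$ by \cite{MaNe}. Hence $\Gamma'$ cannot have two or more components, for then $|\Gamma'|\geq 8\pi>2\pi^{2}$; so $\Gamma'$ is connected. If $\Gamma'$ has genus $0$, Almgren's theorem identifies it with a great $2$-sphere, which after an element of $O(4)$ — a space rotation — is the equatorial $\Gamma_0$, giving $\Sigma=\Sigma_{\Gamma_0}=\Sigma_0$. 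If $\Gamma'$ has positive genus, the Willmore conjecture in the form proved by Marques and Neves \cite{MaNe} gives $|\Gamma'|\geq 2\pi^{2}$ with equality only for an isometric copy of the Clifford torus; since we already know $|\Gamma'|\leq 2\pi^{2}$, equality holds and $\Gamma'$ is a Clifford torus up to an isometry of $\mathbb{S}^{3}$. As isometries of $\mathbb{S}^{3}=\partial B(1)$ are exactly restrictions of elements of $O(4)$, this says that, up to a space rotation, $\Sigma$ is the minimal cone over a Clifford torus. These two cases exhaust the possibilities.

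I do not expect any serious analytic obstacle: Theorem \ref{Density} has already done the hard work of forcing $\Sigma$ to be a cone and computing the area of its link, and the surface-theoretic inputs — Almgren in genus $0$, Marques--Neves in positive genus — are precisely calibrated to the threshold $2\pi^{2}$, which lies strictly above $4\pi$. The points that need genuine attention are bookkeeping ones: making sure that $R_0^{-1}\partial\Sigma$ really is the link of the cone and hence a closed embedded minimal surface of $\mathbb{S}^{3}$ to which the above classification applies; the disconnectedness argument; and checking that the ambient isometries appearing in the rigidity statements of Almgren and of Marques--Neves are indeed space rotations in the sense of the theorem, i.e.\ elements of $O(4)$ restricted to the unit sphere.
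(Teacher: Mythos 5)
Your argument is correct and follows essentially the same route as the paper: equality in \eqref{CorEqualityA} forces $\Sigma$ to be a cone with link $R_0^{-1}\partial\Sigma \subset \mathbb{S}^3$ of area at most $2\pi^2$, and then the Marques--Neves resolution of the Willmore conjecture identifies the link as a great sphere or a Clifford torus. Your write-up is in fact more detailed than the paper's (which simply cites \cite{MaNe}), since you make explicit the connectedness reduction and the use of Almgren's theorem in genus zero, but the underlying proof is the same.
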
 

In any dimension, we can also obtain a more specific rigidity result using Allard's Regularity Theorem \cite{Allard}.
 
\begin{theorem}\label{ThAllard}
Let $\Sigma$ be a properly embedded free boundary minimal hypersurface in $(M^n, g_{Sch})$. There exists a constant $\epsilon (n) >0$, depending on the dimension, so that if there exists an embedded minimal hypersurface $\Gamma \subset \mathbb{S}^{n-1}$ that achieves the equality in \eqref{CorEqualityA} and $  |\Gamma|\Theta _\Gamma (\Sigma ) < \omega _{n-2} + \epsilon (n)$, where $\omega _{n-2}$ is the volume of $\mathbb{S}^{n-2}$; then, up to a rotation, $\Sigma \equiv \Sigma_0$.
\end{theorem}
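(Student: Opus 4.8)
The strategy is to combine the rigidity part of Theorem \ref{Density} with Allard's Regularity Theorem in order to produce, in arbitrary dimension, a dimensional gap for the volume of closed minimal hypersurfaces of $\mathbb{S}^{n-1}$; this will play the role that the Willmore conjecture of Marques--Neves plays in the $n=4$ refinement of Theorem \ref{ThMN}. The constant $\epsilon(n)$ will be manufactured out of the Allard constant for codimension-one integral varifolds in $\mathbb{R}^n$.

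\emph{Step 1 (reduction to a gap in the sphere).} Since $\Sigma$ attains equality in \eqref{CorEqualityA}, Theorem \ref{Density} gives that $\Sigma$ is a minimal cone, i.e. $\Sigma=\Sigma_{\Gamma'}$ for $\Gamma':=R_0^{-1}\partial\Sigma$, an embedded closed orientable minimal hypersurface of $\mathbb{S}^{n-1}$, and moreover
\[
|\Gamma'| = |R_0^{-1}\partial\Sigma| = |\Gamma|\,\Theta_\Gamma(\Sigma) < \omega_{n-2}+\epsilon(n).
\]
Hence it suffices to prove that, for $\epsilon(n)>0$ small and depending only on $n$, the bound $|\Gamma'|<\omega_{n-2}+\epsilon(n)$ forces $\Gamma'$ to be a totally geodesic equator of $\mathbb{S}^{n-1}$; in that case $\Sigma=\Sigma_{\Gamma'}=\Sigma_0$ up to a space rotation, which is the assertion. (Note that no connectedness assumption on $\Gamma'$ needs to be made.)

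\emph{Step 2 (Allard at the vertex of the Euclidean cone).} Consider the Euclidean minimal cone $C_{\Gamma'}=\{\lambda y: y\in\Gamma',\ \lambda>0\}\cup\{0\}\subset\mathbb{R}^n$, viewed as an integral $(n-1)$-varifold. It is stationary in all of $\mathbb{R}^n$: it is stationary in $\mathbb{R}^n\setminus\{0\}$ because $\Gamma'$ is minimal in $\mathbb{S}^{n-1}$, and the vertex carries no $\mathcal{H}^{n-1}$-mass, so no boundary contribution appears. By $1$-homogeneity its density ratio is constant and equals the density at the vertex, and a direct computation gives $\Theta(C_{\Gamma'},0)=|\Gamma'|/\omega_{n-2}$. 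Choose $\epsilon(n):=\omega_{n-2}\,\delta(n)$, where $\delta(n)>0$ is the threshold in Allard's Regularity Theorem \cite{Allard} for codimension-one integral varifolds in $\mathbb{R}^n$ (for a stationary varifold no mean-curvature smallness is required, so $\delta(n)$ depends on $n$ only). Then $\Theta(C_{\Gamma'},0)<1+\delta(n)$, hence $\operatorname{spt}\|C_{\Gamma'}\|$ is, in a neighborhood of $0$, a multiplicity-one $C^{1,\alpha}$ graph through $0$; being stationary and of multiplicity one, it is there a smooth minimal hypersurface. Finally, $C_{\Gamma'}$ is dilation-invariant, so its tangent cone at $0$ is $C_{\Gamma'}$ itself, while the tangent cone of a $C^1$ submanifold at one of its points is the tangent hyperplane; therefore $C_{\Gamma'}=T_0C_{\Gamma'}$ is a hyperplane through the origin and $\Gamma'=C_{\Gamma'}\cap\mathbb{S}^{n-1}$ is an equator. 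Regularity at the vertex automatically forces the whole cone to be a single hyperplane, which is why no connectedness hypothesis on $\Gamma'$ is needed.

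The step requiring the most care is Step 2: one must verify that $C_{\Gamma'}$ is genuinely stationary \emph{across} the vertex (so that the Allard constant depends on $n$ alone and no spurious mean-curvature error enters), pin down the normalization $\Theta(C_{\Gamma'},0)=|\Gamma'|/\omega_{n-2}$ so that the regularity threshold translates into the explicit value $\epsilon(n)=\omega_{n-2}\delta(n)$, and check that $C^{1}$-regularity at the vertex of a cone upgrades to flatness of the cone. None of these points is deep, but they are precisely where the dimensional constant $\epsilon(n)$ is produced; everything else is a direct application of Theorem \ref{Density}.
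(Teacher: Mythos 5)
Your argument is correct and follows essentially the same route the paper intends: the equality case of Theorem \ref{Density} reduces the statement to a small-area gap for the minimal cone over $R_0^{-1}\partial\Sigma$, the constancy of the density ratio for a cone (the role played by the Monotonicity Formula in the paper's one-line sketch) converts the hypothesis $|\Gamma|\Theta_\Gamma(\Sigma)<\omega_{n-2}+\epsilon(n)$ into a vertex density below $1+\delta(n)$, and Allard's Regularity Theorem then forces the cone to be a hyperplane, i.e.\ $\Sigma\equiv\Sigma_0$ up to rotation. In fact your write-up supplies exactly the details (stationarity across the vertex, the normalization $\Theta(C_{\Gamma'},0)=|\Gamma'|/\omega_{n-2}$, and the flatness of a cone that is $C^1$ at its vertex) that the paper leaves implicit.
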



\section{Preliminaries}

Let $\Sigma$ be a properly embedded hypersurface in $(M^n, g_{Sch})$. Consider the class of $C^1$ vector fields $X$ on $M^n$ that are tangential along $\partial M$ and compactly supported. We use $\{\varphi(t,\cdot)\}$ to denote the one-parameter family of diffeomorphisms associated to $X$, and use it to obtain a variation of $\Sigma$ with variational vector field $X$, i.e. we consider $\varphi(t, \Sigma)=\{\varphi(t, x);\, x\in\Sigma\}$. The first derivative of the area functional in the direction of $X$ is given as
\[
\frac{d}{dt}\big{|}_{t=0}area_{g_{Sch}}(\varphi(t,\Sigma))=\int_{\Sigma}g_{Sch}(X,H)dv_{\Sigma}+\int_{\partial\Sigma}g_{Sch}(X,\nu)ds ,
\]
where $H$ and $\nu$ denote the mean curvature and outward pointing unit co-normal vectors of $\partial\Sigma$, respectively. We also denote by $g$ the induced metric on $\Sigma$. It follows from this formula that free boundary minimal hypersurfaces are precisely the critical points of the area functional with respect to tangential variations.

Let $\xi$ denote a globally defined unit normal vector field along $\Sigma$, which exists since $\Sigma$ is properly embedded. From now on, we restrict our attention to smooth variational vector fields that are normal to $\Sigma$, i.e., along the hypersurface $X=u \xi$ for some smooth function $u: \Sigma\rightarrow  \mathbb{R}$. The free boundary condition implies that $X$ is an admissible tangential variational vector field. Assuming that $\Sigma$ is properly embedded and minimal ($H=0$), the second derivative of the area functional can be computed as
\[
\frac{d^2}{dt^2}\big{|}_{t=0}area_g(\varphi(t,\Sigma))=Q_{\Sigma}(u,u)\,,
\]
where $Q_{\Sigma}(\cdot, \cdot)$ is the quadratic form given by
\begin{equation}\label{form1}
\begin{split}
Q_{\Sigma}(u,u) & = - \int_{\Sigma}u\left( \Delta_{\Sigma}u + (\overline{{\rm Ric}}(\xi ,\xi) +|A_{\Sigma}|^2)u \right)dv_{\Sigma} \\ 
 & \qquad \qquad +\int_{\partial \Sigma}u\left(\frac{\partial u}{\partial \nu} - A_{\partial M}(\xi ,\xi )u \right)ds\,,
\end{split}
\end{equation}where $\overline{{\rm Ric}}$ is the Ricci  curvature of $(M^{n}, g_{Sch})$ and we are using the notations $A_{\Sigma}$ and $A_{\partial M}$ for the second fundamental forms of $\Sigma$ and $\partial M$, respectively:
\[
A_{\partial M}(V,W) = -g_{Sch}(\bar{\nabla} _{\eta}V,W)
\] 
where $\eta$ is the inwards pointing unit normal along $\partial M$, and 
\[
A_{\Sigma}(V,W) = -g_{Sch}(\bar{\nabla}_\xi V,W)\,.
\]

Fix $R > R_{0}$, as in \cite{RaMo}, we make the following definition:

\begin{definition}
Let $\Sigma(R) =\Sigma \cap \{|x| \leq R\}$. The Morse index for functions  vanishing on $\{|x| = R\}$ , ${\rm Ind}_{F}(\Sigma(R))$, of $\Sigma(R)$ is defined as the maximal dimension of a linear subspace $V$ of smooth functions $u:\Sigma(R)\rightarrow \mathbb{R}$ vanishing on $\{|x| = R\}$ such that $Q_{\Sigma}(u, u)<0$, for all $u \in V\setminus \{0\}$. 

The Morse index, ${\rm Ind}(\Sigma)$, of $\Sigma$ is defined as  
$${\rm Ind}(\Sigma) := \limsup\limits_{R\rightarrow +\infty}{\rm Ind}_{F}(\Sigma(R)),$$possibly being infinite. Moreover, when ${\rm Ind}(\Sigma) = 0$, we say that $\Sigma$ is stable.
\end{definition}

Equivalently, the Morse index ${\rm Ind}_F(\Sigma(R))$ is the number of negative eigenvalues, counting multiplicities, of the problem (cf. \cite[Definition 2.1]{RaMo})
\[
(F)\,\,\,\left\{ \begin{array}{ccc}
      J_{\Sigma}\psi =-\beta \psi & \text{ in } & \Sigma(R) \\
     \psi=0 &\text{ on } &S(R)\cap \Sigma \\
     \dfrac{\partial \psi}{\partial \nu}=0  & \text{ on } & S_{0}\cap\partial\Sigma
    \end{array} \right.  
\]
where $J_{\Sigma}\psi := \Delta_{\Sigma} \psi + (\overline{{\rm Ric}}(\xi ,\xi) +|A_{\Sigma}|^2)\psi$ and $S(R)=\mathbb{S}^{n-1}(R)$ is the sphere centered at the origin and radius $R$. Henceforth, we will consider $n\geq 4$. The case $n=3$ was considered in \cite{RaMo}.

\subsection{Fischer-Colbrie Criterion} 

A first task to do is to characterize the stability, ${\rm Ind}(\Sigma) =0$, in terms of subsolutions of the differential equation $J_{\Sigma} u \leq 0$, this is known as the Fischer-Colbrie Criterion and its proof follows from the original one given in (cf. \cite{FC85}); however we include it here for the sake of completeness. 

\begin{lemma}[Fischer-Colbrie Criterion]\label{LemFC}
Let $\Sigma$ be a properly embedded free boundary minimal hypersurface in the $n$-dimensional Schwarszchild Riemannian manifold $(M^n, g_{Sch})$, $n\geq3$. Let  $J_{\Sigma}$ the Jacobi operator of $\Sigma$. 

If there is a smooth positive function $u$ on $\Sigma$ such that
\[
(\ast) \quad \left\{ \begin{array}{ccc}
     J_{\Sigma}u \leq0& \text{ in } & \Sigma ,\\
     \dfrac{\partial u}{\partial \nu}=0  & \text{ on } & S_{0}\cap\partial\Sigma ,
    \end{array} \right.  
\]then $\Sigma$ is stable, i.e. ${\rm Ind}(\Sigma)=0$.
\end{lemma}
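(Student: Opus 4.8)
The plan is to prove the Fischer–Colbrie criterion by the classical argument adapted to the free boundary setting. Suppose, for contradiction, that $\Sigma$ admits a positive function $u$ solving $(\ast)$ but is not stable. Then, by definition of the Morse index, there exists some $R > R_0$ and a smooth function $\psi$ on $\Sigma(R)$, vanishing on $S(R)\cap\Sigma$ and with $\partial\psi/\partial\nu = 0$ on $S_0\cap\partial\Sigma$, such that $Q_\Sigma(\psi,\psi) < 0$. The idea is to exploit the positive solution $u$ to rewrite the quadratic form $Q_\Sigma(\psi,\psi)$ as a manifestly nonnegative expression, contradicting $Q_\Sigma(\psi,\psi)<0$.

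The key computation is the substitution $\psi = u\varphi$, where $\varphi = \psi/u$ is smooth on $\Sigma(R)$ since $u>0$, vanishes on $S(R)\cap\Sigma$, and satisfies a Neumann-type condition on $S_0\cap\partial\Sigma$. First I would expand, using $J_\Sigma u = \Delta_\Sigma u + (\overline{\mathrm{Ric}}(\xi,\xi) + |A_\Sigma|^2)u$ and the Leibniz rule $\Delta_\Sigma(u\varphi) = \varphi\Delta_\Sigma u + 2\langle\nabla u,\nabla\varphi\rangle + u\Delta_\Sigma\varphi$, to get
\[
\int_{\Sigma(R)} -\psi\bigl(\Delta_\Sigma\psi + (\overline{\mathrm{Ric}}(\xi,\xi)+|A_\Sigma|^2)\psi\bigr)\,dv
= \int_{\Sigma(R)} -u\varphi\bigl(\varphi J_\Sigma u + 2\langle\nabla u,\nabla\varphi\rangle + u\Delta_\Sigma\varphi\bigr)\,dv.
\]
Integrating the term $\int u\varphi\cdot u\Delta_\Sigma\varphi$ by parts over $\Sigma(R)$ produces $-\int |\nabla\varphi|^2 u^2 - 2\int u\varphi\langle\nabla u,\nabla\varphi\rangle$ together with a boundary term $\int_{\partial\Sigma(R)} u^2\varphi\,\partial\varphi/\partial\nu$; the cross terms $2\int u\varphi\langle\nabla u,\nabla\varphi\rangle$ then cancel, leaving
\[
Q_\Sigma(\psi,\psi) = \int_{\Sigma(R)} u^2|\nabla\varphi|^2\,dv - \int_{\Sigma(R)} u\varphi^2\, J_\Sigma u\,dv + (\text{boundary terms}).
\]
Here one must carefully track the boundary of $\Sigma(R)$, which splits into $S(R)\cap\Sigma$ and $S_0\cap\partial\Sigma$. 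On $S(R)\cap\Sigma$ both $\psi$ and $\varphi$ vanish, so that portion contributes nothing. On $S_0\cap\partial\Sigma$, the boundary term from $Q_\Sigma$ in \eqref{form1} is $\int u\varphi(\partial_\nu(u\varphi) - A_{\partial M}(\xi,\xi)u\varphi)$; combined with the boundary term from integration by parts and the hypothesis $\partial u/\partial\nu = 0$ on $S_0\cap\partial\Sigma$, I expect these to reorganize so that the only surviving boundary contribution is $-\int_{S_0\cap\partial\Sigma} u^2\varphi^2 A_{\partial M}(\xi,\xi)\,ds$, which vanishes because $\partial M = S_0$ is totally geodesic (so $A_{\partial M}\equiv 0$).

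Assembling everything and using $J_\Sigma u \le 0$ together with $u>0$, $\varphi^2\ge 0$, we get
\[
Q_\Sigma(\psi,\psi) = \int_{\Sigma(R)} u^2|\nabla\varphi|^2\,dv - \int_{\Sigma(R)} u\varphi^2\,J_\Sigma u\,dv \ge 0,
\]
contradicting $Q_\Sigma(\psi,\psi) < 0$. Since $R$ was arbitrary, $\mathrm{Ind}_F(\Sigma(R)) = 0$ for every $R$, hence $\mathrm{Ind}(\Sigma) = 0$. The main obstacle is the careful bookkeeping of the boundary integrals over $S_0\cap\partial\Sigma$: one must verify that the Neumann condition $\partial u/\partial\nu = 0$ is exactly what makes the cross-terms there cancel and that the residual term is proportional to $A_{\partial M}(\xi,\xi)$, which vanishes by total geodesy of the horizon. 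A minor technical point is the smoothness and admissibility of $\varphi = \psi/u$ near the corner $S(R)\cap S_0\cap\partial\Sigma$, which can be handled by a standard approximation/density argument if needed.
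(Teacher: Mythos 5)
Your argument is correct and is essentially the paper's own proof: the same substitution $\psi = u\varphi$, the cancellation of the cross terms after integration by parts, and the use of $\partial u/\partial\nu = 0$ on $S_0\cap\partial\Sigma$ together with the totally geodesic horizon ($A_{\partial M}\equiv 0$) to dispose of the boundary terms; the paper simply applies this identity with $\psi$ the first eigenfunction of the mixed Dirichlet--Neumann problem on $\Sigma(R)$ to get $\lambda_1(R)\geq 0$, whereas you argue by contradiction with an arbitrary destabilizing test function. One minor remark: the definition of ${\rm Ind}_F(\Sigma(R))$ does not allow you to assume $\partial\psi/\partial\nu = 0$ on $S_0\cap\partial\Sigma$ for the test function, but this extra hypothesis is never actually used in your computation (only $\partial u/\partial\nu=0$ and $A_{\partial M}\equiv 0$ are needed), so the proof stands as written.
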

\begin{proof}
Assume there exists a smooth positive function $u$ in $\Sigma$ satisfying ($\ast$). Given $R>R_0$, take the first eigenvalue $\lambda_1(R)$ and the first eigenfunction $f$ on $\Sigma(R)$ associated with $\lambda_1 (R)$:
\[
\left\{ \begin{array}{ccc}
J_{\Sigma}f=-\lambda_1 (R) f & \text{ in } & \Sigma(R) \\
     f=0 &\text{ on } &S(R)\cap \Sigma \\
     \dfrac{\partial f}{\partial \nu}=0  & \text{ on } & S_{0}\cap\partial\Sigma\,.
    \end{array} \right.  
\]
Set $\psi=\dfrac{f}{u}$. Note that $u\psi=0$ on $S(R)\cap\partial\Sigma$ and $\dfrac{\partial u\psi}{\partial \nu}=0$ on $S_0\cap\partial\Sigma$. Hence, integrating by parts, we obtain that 
\begin{eqnarray*}
\lambda_1(R)\int_{\Sigma(R)}f^{2}dv&=&\int_{\Sigma(R)}|\nabla_{\Sigma}f|^2-f^{2}(\overline{{\rm Ric}}(\xi ,\xi) +|A_{\Sigma}|^2)dv_{\Sigma}\\
&=&\int_{\Sigma(R)}|\nabla_{\Sigma} (u\psi)|^2-(u\psi)^2(\overline{{\rm Ric}}(\xi ,\xi) +|A_{\Sigma}|^2)dv\\
&=&\int_{\Sigma(R)}-u\psi \Delta_{\Sigma}(u\psi)-(\overline{{\rm Ric}}(\xi ,\xi) +|A_{\Sigma}|^2)u^2\psi^2dv+\int_{\partial \Sigma(R)}u\psi\frac{\partial u\psi}{\partial \nu}ds\\
&=& \int_{\Sigma(R)}u\psi^2(-\Delta_{\Sigma}u -u(\overline{{\rm Ric}}(\xi ,\xi) +|A_{\Sigma}|^2)) -2u\psi \, g(\nabla _\Sigma u,\nabla_\Sigma \psi)-u^2\psi\Delta_{\Sigma} \psi dv\\
&\geq&\int_{\Sigma(R)} -2u\psi \, g(\nabla_\Sigma u,\nabla_\Sigma\psi)-u^2\psi\Delta_{\Sigma} \psi dv\\
&=&\int_{\Sigma(R)}|\nabla_{\Sigma}\psi|^2u^2-div_{\Sigma}(u^2\psi\nabla_{\Sigma}\psi)dv\\
&=&\int_{\Sigma(R)}|\nabla_{\Sigma}\psi|^2u^2dv\geq0\,.
\end{eqnarray*}
Hence, $\lambda_1(R)\geq0 $. This is enough to obtain that $Ind_F(\Sigma(R))=0$, and, consequently, $Ind(\Sigma)=0$.
\end{proof}

\begin{remark}
We will use the Fischer-Colbrie Criterion in order to prove Theorems \ref{main1} and \ref{main1a}.
\end{remark}


\subsection{Cones in the Schwarschild space}

Let us consider now a class of minimal hypersurfaces in the Schwarschild space. Let 
$$C_{\Gamma}:=\{ \lambda y \, ; \,y \in \Gamma^{n-2}, \,\lambda\in (0,\infty)\}$$
be a cone in $\mathbb{R}^{n}$, $n\geq 4$, with vertex at the origin. Here $\Gamma^{n-2}$ is an embedded closed orientable minimal hypersurface in $\mathbb{S}^{n-1}$, the standard $(n-1)-$dimensional sphere centered at the origin. We refer to $C_{\Gamma}$ as the {\it minimal cone over ${\Gamma}$}. Note that $\Sigma_0=\Sigma_{\Gamma_0}$, where $\Gamma_0$ is an equator in $\mathbb{S}^{n-1}$. 

Observe that the support function $\rho=\left\langle x,\xi \right\rangle$, $x\in C_{\Gamma}$, satisfies $\rho \equiv 0$ in $C_{\Gamma}$. Hence, $\Sigma_{\Gamma}=\{x\in C_{\Gamma}\,;\,\, \,\,|x|\geq  R_{0} \}$ is a properly embedded free boundary minimal hypersurface in $(M^{n},g_{Sch})$ if, and only if, it is a properly embedded free boundary minimal hypersurface in $(\r^{n}\setminus \overline{B_{0}} , \delta)$, where $\delta$ the Euclidean metric.

\begin{example}[Clifford cones]
As an example, consider the Clifford torus 
$$\mathbb{T}_{m,n-2}:=\mathbb{S}^{m}(\lambda_1) \times \mathbb{S}^{(n-2)-m}(\lambda_2),$$ where $\lambda_1=\sqrt{\frac{m}{n-2}}$, $\lambda_2 = \sqrt{\frac{(n-2)-m}{n-2}}$ and $1 \leq m \leq n-2$. Since $\mathbb{T}_{m,n}$ is a minimal  hypersurface in $\mathbb{S}^{n-1}$, the cone $C_{m,n}=\{ \lambda y \ ; y \in \mathbb{T}_{m,n-2}, \lambda\in (R_0,\infty)\}$ is a minimal hypersurface in the Riemannian Schwarzschild space $(M^n, g_{Sch})$. Here, $\s^{p}(\lambda)$ denotes the $p-$dimensional sphere of Euclidean radius $\lambda$ centered at the origin.
\end{example}


\subsubsection{Relating the Schwarzschild and Euclidean geometries of cones}

Let $\Sigma : = \Sigma _{\Gamma}$ be a cone as defined above. The Schwarzschild metric is conformal to the Euclidan metric by $g_{Sch}=f^2\delta$, where
\begin{equation}\label{Factorf}
f( |x| )=\left( 1+\frac{m}{2|x|^{n-2}} \right)^{\frac{2}{n-2}}\, , 
\end{equation}then, if we denote by $g$ and $g_\delta$ the induced metric on $\Sigma$ in the Schwarzschild and Euclidean metric respectively, we can observe that both metrics are conformal and related by 
\begin{equation}\label{FactorF} 
g :=F^{\frac{4}{N-2}}g_\delta , \text{ where } F = \left( 1+\frac{m}{2|x|^{n-2}} \right)^{\frac{N-2}{n-2}} \text{ and } N= n-1.
\end{equation}

Also, we can relate the second fundamental forms using \cite[Lemma 10.1.1]{RaLo} and that $\Sigma $ is a cone, specifically
\begin{equation}\label{Second}
|A_{(\Sigma, g_{Sch})}|^2 =  |A_{(\Sigma,\delta)}|^2 F^{-\frac{4}{N-2}},
\end{equation}where $A_{(\Sigma,g_{Sch})}$ and $A_{(\Sigma, \delta)}$ are the second fundamental forms of $\Sigma$ as a hypersurface in the Schwarschild and Euclidean metric respectively. Recall that, since $\Sigma$ is minimal, $|A_{(\Sigma,\delta)}|^2 = - {\rm S}_{(\Sigma , \delta)}$ is nothing but the scalar curvature of $\Sigma$ as a hypersurface in the Euclidean space $(\r ^{n} \setminus \overline{B_{0}} , \delta)$. 

Finally, using the Yamabe equation (cf. \cite[Section 1]{Escobar}) for the conformal metrics \eqref{FactorF} and $ |A_{(\Sigma, g_{Sch})}|^2 = - {\rm S}_{(\Sigma , \delta)} F^{-\frac{4}{N-2}}$, we obtain
\begin{equation}\label{LaplF}
-F^{-\frac{N+2}{N-2}}\Delta_{\delta}F =\frac{N-2}{4(N-1)}\left( {\rm S}_{\Sigma} + |A_{(\Sigma, g_{Sch})}|^2\right),
\end{equation}where $\Delta _\delta$ denotes the Laplacian with respect to the metric $g_\delta$


\section{Bounds on the Morse index}

Since round spheres $S(R)\equiv \s ^{n-1}(R)$, $R\geq R_{0}$, are totally umbilic in the Euclidean space and the Schwarzschild metric is conformal to the Euclidean metric, it follows that $S(R)$ is totally umbilic in the Schwarszchild space. In particular (cf. \cite[Lemma 10.1.1]{RaLo}), one can easily see that the second fundamental form of $S(R)$ with respect to the outer unit normal in the Schwarschild space is given by
\[
A_{S(R)}(\nu,\nu)= - \kappa(R) \, g_{Sch}(\nu,\nu) 
\]for every $\nu \in T_{x}S(R)$, $|x| =R$ and $R\geq R_{0}$, where 
\[
\kappa(R) =  \frac{(R^{n-2}-R^{n-2}_{0}) R}{(R^{n-2}+R^{n-2}_{0})^{\frac{n}{n-2}}} .
\]

For each $R>R_0$, consider the compact domain $\Omega(R)=\overline{B(R)} \setminus B_0$. Since a properly embedded free boundary minimal hypersurface $\Sigma$ in $(M^n,g_{Sch})$ has boundary $\partial\Sigma\subset S_0$, then $\Sigma(R)$, the connected component of $\Sigma \cap \Omega (R)$ whose boundary contains $\partial \Sigma$, is a properly embedded minimal hypersurface in $\Omega(R)$ whose boundary satisfies $\partial \Sigma(R)\subset S_0\cup S(R)$. Note also that along the components of the boundary $\partial \Sigma(R)$ in $S_0$, $\Sigma(R)$ meets $S_0$ orthogonally. However, the components of the boundary $\partial\Sigma(R)$ in $S(R)$ might fail to satisfy this orthogonality condition. This means that it can happen that $\Sigma(R)$ is not a properly embedded free boundary minimal hypersurface in $\Omega(R)$. 

When $\Sigma(R)$ is a properly embedded free boundary minimal hypersurface in $\Omega(R)$, i.e., along all the components of the boundary $\partial \Sigma(R)$, $\Sigma(R)$ meets the boundary $\partial\Omega(R)$ orthogonally (this is the case when $\Sigma$ is a cone), we can consider the Morse index (quadratic) form, $Q_{\Sigma}(R)$, of $\Sigma(R)$ given by
\begin{equation}\label{RQuadratic}
\begin{split}
Q_{\Sigma}(R)(\psi,\varphi) & = - \int_{\Sigma(R)}\psi\left( \Delta\varphi + (\overline{{\rm Ric}}(\xi ,\xi) +|A_{\Sigma}|^2)\varphi \right)dv_{\Sigma} \\ 
 & \qquad \qquad +\int_{\partial \Sigma(R)}\psi\left(\frac{\partial \varphi}{\partial \nu} - q\varphi \right)ds\,,
\end{split}
\end{equation}
where 
\[
q=\,\,\,\, \left\{ \begin{array}{ccc}
     0      & \text{ on } & S_0 ,\\
     \kappa(R) & \text{ on } & S(R) .
    \end{array} \right.    
\]

It is worth to mention here that, for every $R>R_0$, we always have the quadratic form $Q_{\Sigma}$ related with the index ${\rm Ind}_{F}(\Sigma(R))$ (see Definition \ref{form1}). If $\Sigma(R)$ is also a free boundary minimal hypersurface in $\Omega(R)$, we also have the quadratic form $Q_{\Sigma}(R)$ (given by \eqref{RQuadratic}). In this case, let us denote by ${\rm Ind}_{M}(\Sigma(R))$ the Morse index of $\Sigma(R)$ as a free boundary minimal hypersurface with respect to the quadratic form $Q_{\Sigma}(R)$. Observe that:

\begin{quote}
{\bf Claim A:} {\it  ${\rm Ind}_F(\Sigma(R))\leq {\rm Ind}_{M}(\Sigma(R))$.}
\end{quote} 
\begin{proof}[Proof of Claim A]
In fact, if $\psi\in V$, where $V$ is the space spanned by the eigenfunction $\psi_i$ of $(F)$, with $\beta_i<0$, we obtain  $Q_{\Sigma}(R)(\psi,\psi)=Q_{\Sigma}(\psi,\psi)<0 $; this proves Claim A.
\end{proof}

Moreover, since $q\geq 0$ and is non-identically zero, it follows (cf. \cite[Theorem 4.1]{TranZhou}) that the Morse index of $\Sigma(R)$, as a free boundary minimal hypersurface in $\Omega(R)$, is given by the addition ${\rm Ind}_M(\Sigma(R))= {\rm Ind}_{D}(\Sigma(R))+ {\rm Null}_D (\Sigma(R))+{\rm Ind}_{R}(\Sigma(R))$, where  ${\rm Ind}_{D}(\Sigma(R))$ is the number of non-positive eigenvalues, counting multiplicity, of the problem
\[
(D)\,\,\, \left\{ \begin{array}{ccc}
     \Delta_{\Sigma} v + (\overline{{\rm Ric}}(\xi ,\xi) +|A_{\Sigma}|^2)v =-\delta v & \text{ in } & \Sigma(R) ,\\ 
     v=0 & \text{ on } & \partial \Sigma(R),
    \end{array} \right.  
\]
${\rm Null}_D(\Sigma(R))$ is the nullity of the above problem, and ${\rm Ind}_{R}(\Sigma(R))$ is the number of eigenvalues smaller than $1$, counting multiplicity, of the problem 
\[
(R)\,\,\, \left\{ \begin{array}{ccc}
     \Delta_{\Sigma} u + (\overline{{\rm Ric}}(\xi ,\xi) +|A_{\Sigma}|^2)u =0 & \text{ in } & \Sigma(R),\\
     \dfrac{\partial u}{\partial \nu}=\lambda q u & \text{ on } & \partial \Sigma(R).
    \end{array} \right.  
\]

Reasoning as above, we can also easily show:
\begin{quote}
{\bf Claim B:} {\it  ${\rm Ind}_D(\Sigma(R))\leq {\rm Ind}_{F}(\Sigma(R))$.}
\end{quote} 


\subsection{Relating the Euclidean and Schwarschild index forms}

From now on, $\Sigma$ will always denote a minimal cone $\Sigma := C_{\Gamma} \cap \{ |x| \geq R_{0} \}$. As we have pointed out above, a minimal cone, $\Sigma$, in the Schwarshild metric is also minimal in the Euclidean metric, and viceversa. Hence, we will relate the Schwarschild index form $Q_{\Sigma}(R)$ and the Euclidean index form $Q_{\delta}(R)$ for the Dirichlet problem; i.e., 
\begin{equation}\label{DeltaQuadratic}
Q_{\delta}(R)(\psi,\varphi)  = - \int_{\Sigma \cap \Omega (R)}\psi\left( \Delta_{\delta}\varphi +|A_{\Sigma}|^2)\varphi \right)d\delta , \, \, \psi = 0 \text{ on } \partial \Sigma (R) ;
\end{equation}where $d\delta$ is the area element associated to $g_{\delta}$.

\begin{lemma}\label{LemRelation}
If $\psi$ is zero on $\partial \Sigma(R)$, $R>R_{0}$, then 
\begin{equation}\label{Relation}
Q_\Sigma (R)(F^{-1}\psi, F^{-1}\psi) = Q_{\delta}(R)(\psi,\psi) -\frac{N}{N-2}\int_{\Sigma(R)} \left( F^{-1}\Delta_{\delta} F\right)\psi^2d\delta.
\end{equation}
\end{lemma}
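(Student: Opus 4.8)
The plan is to compute both quadratic forms explicitly under the conformal change $g = F^{\frac{4}{N-2}} g_\delta$ with $N = n-1$, and to track carefully how the Laplacian, the volume element, and the normal derivative transform. First I would recall the standard conformal transformation law: if $\tilde g = F^{\frac{4}{N-2}} g_\delta$ on an $N$-dimensional manifold $\Sigma$, then for any function $w$,
\[
\Delta_{\tilde g} w = F^{-\frac{N+2}{N-2}} \Bigl( \Delta_\delta (Fw) - \frac{\Delta_\delta F}{F} \cdot F w \Bigr) + \text{(lower order)},
\]
but it is cleaner here to use the precise identity behind the Yamabe operator: for $w = F^{-1}\psi$, one has the conformal covariance
\[
\Delta_\delta \psi = \Delta_\delta(F w) = F^{\frac{N+2}{N-2}}\Delta_{\tilde g} w + w\,\Delta_\delta F.
\]
Then $dv_{\tilde g} = F^{\frac{2N}{N-2}} d\delta$, and on $\partial\Sigma$ the conormal derivatives are related by $\partial_{\nu_{\tilde g}} = F^{-\frac{2}{N-2}} \partial_{\nu_\delta}$ together with $ds_{\tilde g} = F^{\frac{2(N-1)}{N-2}} ds_\delta$. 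The point is that since $\psi$ vanishes on $\partial\Sigma(R)$, the boundary term in $Q_\Sigma(R)(F^{-1}\psi, F^{-1}\psi)$ contributes nothing (the factor $F^{-1}\psi = 0$ there), so I only need to handle the bulk integral.

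Next I would substitute $u := F^{-1}\psi$ into the bulk part of \eqref{RQuadratic}. Using $|A_\Sigma|^2 = |A_{(\Sigma,g_{Sch})}|^2$ and \eqref{Second}, plus the fact that for a minimal cone $\overline{{\rm Ric}}(\xi,\xi) + |A_\Sigma|^2$ in the Schwarzschild metric must be re-expressed via \eqref{LaplF} — namely ${\rm S}_\Sigma + |A_{(\Sigma,g_{Sch})}|^2 = -\frac{4(N-1)}{N-2} F^{-\frac{N+2}{N-2}}\Delta_\delta F$ — I can convert the zeroth-order term. Actually the cleanest route: write $Q_\Sigma(R)(u,u) = -\int_{\Sigma(R)} u\bigl(\Delta_{g_{Sch}} u + (\overline{{\rm Ric}}(\xi,\xi)+|A_\Sigma|^2)u\bigr) dv_{g_{Sch}}$, replace $u = F^{-1}\psi$, use the conformal Laplacian identity to get $\Delta_{g_{Sch}}(F^{-1}\psi) = F^{-\frac{N+2}{N-2}}\bigl(\Delta_\delta \psi - \psi F^{-1}\Delta_\delta F\bigr)$, and note $dv_{g_{Sch}} = F^{\frac{2N}{N-2}}d\delta$, so the product $u \cdot F^{-\frac{N+2}{N-2}} \cdot F^{\frac{2N}{N-2}} = F^{-1}\psi \cdot F^{\frac{N-2}{N-2}} = \psi$ (checking: $-\frac{N+2}{N-2} + \frac{2N}{N-2} = \frac{N-2}{N-2} = 1$, and times $F^{-1}$ gives $F^0$). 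This collapses the Laplacian piece to $-\int \psi \Delta_\delta \psi \, d\delta$ plus $+\int F^{-1}(\Delta_\delta F)\psi^2 \, d\delta$.

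Then I would handle the curvature term $-\int (\overline{{\rm Ric}}(\xi,\xi)+|A_\Sigma|^2) u^2 \, dv_{g_{Sch}}$. Here I use the Gauss equation / the fact that for a minimal hypersurface $\overline{{\rm Ric}}(\xi,\xi) + |A_\Sigma|^2 = {\rm S}_\Sigma + |A_{(\Sigma,g_{Sch})}|^2$ is not literally true in general but the relevant combination is governed by \eqref{LaplF}; more precisely \eqref{LaplF} gives $\overline{{\rm Ric}}(\xi,\xi) + |A_\Sigma|^2$ after accounting for the ambient scalar curvature, or alternatively I directly use that the Schwarzschild metric is scalar-flat so the Jacobi operator's potential rewrites cleanly. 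Combining, $-\int(\overline{{\rm Ric}}(\xi,\xi)+|A_\Sigma|^2) u^2 dv_{g_{Sch}}$ becomes $-\int |A_{(\Sigma,\delta)}|^2 \psi^2 d\delta + \frac{2}{N-2}\int F^{-1}(\Delta_\delta F)\psi^2 d\delta$ after using \eqref{Second} and \eqref{LaplF} to trade the potential for an $F^{-1}\Delta_\delta F$ term (the coefficient $\frac{2}{N-2}$ coming from $\frac{N-2}{4(N-1)} \cdot \frac{4(N-1)}{N-2}\cdot\ldots$ — I will pin down the exact constant in the calculation). Adding this to the $+\int F^{-1}(\Delta_\delta F)\psi^2$ from the Laplacian piece yields total coefficient $1 + \frac{2}{N-2} = \frac{N}{N-2}$ on $\int F^{-1}(\Delta_\delta F)\psi^2 d\delta$, but with sign $-\frac{N}{N-2}$ as in \eqref{Relation} once the integration by parts $-\int\psi\Delta_\delta\psi\,d\delta = \int|\nabla_\delta\psi|^2 d\delta$ is absorbed into the definition of $Q_\delta(R)$. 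This produces exactly \eqref{Relation}.

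The main obstacle I anticipate is bookkeeping the exponents and signs: there are three different conformal weights in play ($F$ with exponent $\frac{4}{N-2}$ on the metric, the $F^{-\frac{N+2}{N-2}}$ in the conformal Laplacian, and the volume weight $F^{\frac{2N}{N-2}}$), and it is easy to misplace a factor of $F$ or a sign when one substitutes $F^{-1}\psi$ and integrates by parts. A subtler point is making sure the term $\overline{{\rm Ric}}(\xi,\xi) + |A_\Sigma|^2$ is correctly rewritten: I must verify via \eqref{LaplF} (which encodes the Yamabe relation for the induced metrics, using that Schwarzschild is scalar-flat so ${\rm S}_{(\Sigma,g_{Sch})} = \overline{{\rm Ric}}(\xi,\xi)\cdot(\text{stuff}) + \ldots$ — in fact for a minimal hypersurface in a scalar-flat ambient, $2(\overline{{\rm Ric}}(\xi,\xi) + |A_\Sigma|^2) = -{\rm S}_{(\Sigma,g_{Sch})} + |A_\Sigma|^2 + \overline{\rm S}$... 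I will need to be careful here) that the potential in $J_\Sigma$ is precisely $-\frac{4(N-1)}{N-2} F^{-\frac{N+2}{N-2}}\Delta_\delta F + |A_{(\Sigma,g_{Sch})}|^2$-adjusted so that everything balances. Once the constants are nailed down, the identity follows by a direct substitution-and-integration-by-parts argument with no analytic subtlety, since $\psi$ is compactly supported away from $S(R)$ in the relevant sense (vanishing there kills all boundary contributions).
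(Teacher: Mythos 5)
Your route is essentially the paper's: both arguments rest on the conformal relation \eqref{FactorF}, the identity \eqref{Second}, the Yamabe relation \eqref{LaplF}, the Gauss equation together with the scalar-flatness of $g_{Sch}$, and the volume relation $dv_{\Sigma}=F^{\frac{2N}{N-2}}d\delta$. The paper channels the computation through the conformal covariance of the Yamabe operator, $L_{g}(F^{-1}\psi)=F^{-\frac{N+2}{N-2}}L_{\delta}\psi$, while you use the equivalent pointwise identity $\Delta_{g}(F^{-1}\psi)=F^{-\frac{N+2}{N-2}}\bigl(\Delta_{\delta}\psi-F^{-1}(\Delta_{\delta}F)\psi\bigr)$, which is correct, and your remark that all boundary contributions vanish because $\psi$ (hence $F^{-1}\psi$) is zero on $\partial\Sigma(R)$ is also fine. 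The Gauss-equation formula you hesitated over, $2\bigl(\overline{{\rm Ric}}(\xi,\xi)+|A_{\Sigma}|^2\bigr)=\overline{{\rm S}}-{\rm S}_{\Sigma}+|A_{\Sigma}|^2$ for minimal $\Sigma$, is correct, and with $\overline{{\rm S}}=0$ it is exactly \eqref{EqGauss}.

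However, the constant-chasing in your potential-term step is wrong as written, and the mechanism you propose for recovering the sign is not where the sign comes from. The correct bookkeeping is: \eqref{EqGauss}, \eqref{LaplF} and \eqref{Second} give
\[
\overline{{\rm Ric}}(\xi,\xi)+|A_{(\Sigma,g)}|^2=\frac{2(N-1)}{N-2}F^{-\frac{N+2}{N-2}}\Delta_{\delta}F+F^{-\frac{4}{N-2}}|A_{(\Sigma,\delta)}|^2 ,
\]
so, after multiplying by $u^2\,dv_{\Sigma}=F^{-2}\psi^2F^{\frac{2N}{N-2}}d\delta$, the potential term contributes $-\int|A_{(\Sigma,\delta)}|^2\psi^2d\delta-\frac{2(N-1)}{N-2}\int F^{-1}(\Delta_{\delta}F)\psi^2d\delta$, not $+\frac{2}{N-2}\int F^{-1}(\Delta_{\delta}F)\psi^2d\delta$. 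Combined with the $+\int F^{-1}(\Delta_{\delta}F)\psi^2d\delta$ produced by the Laplacian piece, the total coefficient is $1-\frac{2(N-1)}{N-2}=-\frac{N}{N-2}$, which is precisely \eqref{Relation}; the minus sign is already present at this stage and does not appear ``once the integration by parts is absorbed into $Q_{\delta}(R)$''. Since you explicitly deferred pinning down the constant, this is a fixable bookkeeping error rather than a structural flaw, but as stated that step does not close, and your announced reconciliation $1+\frac{2}{N-2}=\frac{N}{N-2}$ with a sign flip would need to be replaced by the computation above.
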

\begin{proof}
First, it follows from the Gauss equation that
\begin{equation}\label{EqGauss}
\overline{{\rm Ric}}(\xi,\xi)+|A_{(\Sigma,g)}|^2 =- \frac{{\rm S}_{\Sigma}}{2}+\frac{|A_{(\Sigma,g)}|^2}{2}
\end{equation}where ${\rm S}_{\Sigma} $ is the scalar curvature of $(\Sigma ,g)$. Second, denote by $L_g$ the conformal Yamabe operator of the metric $g$ on $\Sigma$ given by
\begin{equation}\label{EqYamabe}
L_{g}u=\Delta_{\Sigma}u-\frac{N-2}{4(N-1)}{\rm S}_{\Sigma}u\, ,
\end{equation}

Hence, for any smooth function $u \in C^{\infty}(\Sigma)$, we obtain
\begin{equation*}
\begin{split}
\Delta_{\Sigma}u+(\overline{{\rm Ric}}(\xi,\xi)+|A_{(\Sigma,g)}|^2)u &=^{\eqref{EqYamabe}} L_g(u)+\frac{N-2}{4(N-1)}{\rm S}_{\Sigma}u+(\overline{{\rm Ric}}(\xi,\xi)+|A_{(\Sigma,g)}|^2)u\\
&=^{\eqref{EqGauss}}L_g(u)-\frac{N}{4(N-1)}{\rm S}_{\Sigma} u + \frac{|A_{(\Sigma,g)}|^2}{2}u\\
&=^{\eqref{LaplF}} L_{g}u+\frac{N}{N-2}\left(F^{-\frac{N+2}{N-2}}\Delta_{\delta}F\right)u \\ 
 & \qquad + \left( 1-\frac{N-2}{4(N-1)} \right)|A_{(\Sigma,\delta)}|^2F^{-\frac{4}{N-2}}u ,
\end{split}
\end{equation*}where we have used \eqref{Second}. On the other hand, since $g=F^{\frac{4}{N-2}}g_\delta$, it follows (cf. \cite[Section 1]{Escobar}) that 
\begin{equation*}
L_{g}(F^{-1}\psi)=F^{-\frac{N+2}{N-2}}L_{\delta}(\psi)=F^{-\frac{N+2}{N-2}}\left(\Delta_{\delta}\psi-\frac{N-2}{4(N-1)}{\rm S}_{(\Sigma,\delta)}\psi\right)\, .
\end{equation*}

Therefore, using the above two equations and \eqref{Second}, the Jacobi operator $J_{\Sigma} v=\Delta_{\Sigma}v+(\overline{{\rm Ric}}(\xi,\xi)+|A_{(\Sigma,g)}|^2)v$ on $\Sigma$ satisfies
\begin{equation*}
\begin{split}
J_{\Sigma}(F^{-1}v)&=L_{g}(F^{-1}v)+\frac{N}{N-2}\left(F^{-\frac{2N}{N-2}}\Delta_{\delta}F\right)v+\left( 1-\frac{N-2}{4(N-1)} \right)|A_{(\Sigma,\delta)}|^2F^{-\frac{N+2}{N-2}}v\\
&= F^{-\frac{N+2}{N-2}}\left(\Delta_{\delta}v-\frac{N-2}{4(N-1)}{\rm S}_{(\Sigma ,\delta)}v\right)+\frac{N}{N-2}\left( F^{-\frac{2N}{N-2}}\Delta_{\delta} F\right)v \\
& \qquad +\left( 1-\frac{N-2}{4(N-1)} \right)|A_{(\Sigma,\delta)}|^2F^{-\frac{N+2}{N-2}}v \\
&=F^{-\frac{N+2}{N-2}}\left( \left(\Delta_{\delta}v+ |A_{(\Sigma,\delta)}|^2v\right)+\frac{N}{N-2}\left( F^{-1}\Delta_{\delta} F\right)v \right)\,.
\end{split}
\end{equation*}

Consider the operator 
\begin{equation}\label{J}
\mathbb{J}_{\delta}v=\left( \Delta_{\delta}+|A_{(\Sigma,\delta)}|^2\right)v + \frac{N}{N-2}\left( F^{-1}\Delta_{\delta} F\right)v ,
\end{equation}hence we can re-write the above Jacobi operator as
$$ v \mathbb{J}_{\delta}(v)= (F^{-1} v)J_{\Sigma}(F^{-1}v) F^{\frac{2N}{N-2}} .$$

Thus, since the volume elements of $g$ and $g_{\delta}$ are related by $dv_{\Sigma} = F^{\frac{2N}{N-2}} d\delta$ from \eqref{FactorF}, we have obtained that the index forms $Q_{\Sigma}(R)$ and $Q_{\delta}(R)$, given by \eqref{RQuadratic} and \eqref{DeltaQuadratic} respectively, satisfy \eqref{Relation} for every $\psi$ that vanishes on $\partial\Sigma (R)$ as claimed.
\end{proof}

Finally, we must control the last term in the above equation \eqref{Relation}. Hence,
\begin{equation*}
\frac{N}{N-2}F^{-1}\Delta_{\delta} F=\frac{N}{N-2}F^{-1}(\frac{d^2F}{dr^2}+\frac{N-1}{r}\frac{d F}{dr})=(n-1)m\frac{2r^{n-4}}{\left(m+2r^{n-2}\right)^{2}}, 
\end{equation*}
for all $x \in \Sigma$ such that $|x|=r \geq R_{0}$. Thus, Lemma \ref{LemRelation} and the above observation implies

\begin{lemma}\label{LemRelation2}
If $\psi$ is zero on $\partial \Sigma(R)$, $R>R_{0}$, then 
\begin{equation*}
Q_{\Sigma}(R)(F^{-1}\psi,F^{-1}\psi)= Q_{\delta}(R)(\psi,\psi)-(n-1)m\int_{\Sigma(R)} \frac{2r^{n-4}}{\left(m+2r^{n-2}\right)^{2}}\psi^2d\delta\,.
\end{equation*}
\end{lemma}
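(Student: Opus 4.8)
The statement follows by combining Lemma \ref{LemRelation} with an explicit evaluation of the radial quantity $\tfrac{N}{N-2}F^{-1}\Delta_\delta F$ that appears in \eqref{Relation}. So the plan is short: first quote Lemma \ref{LemRelation}, which already isolates the correction term $-\tfrac{N}{N-2}\int_{\Sigma(R)}(F^{-1}\Delta_\delta F)\psi^2\, d\delta$ (the hypothesis that $\psi$ vanishes on $\partial\Sigma(R)$ being inherited verbatim); it then remains only to verify the pointwise identity
\[
\tfrac{N}{N-2}\,F^{-1}\Delta_\delta F \;=\; (n-1)m\,\frac{2r^{n-4}}{(m+2r^{n-2})^2}, \qquad r=|x|,
\]
on $\Sigma$, and to substitute it back.

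To prove this identity I would use two structural facts about $\Sigma$. First, $F$ is a function of $r=|x|$ alone, and $\Sigma=C_\Gamma\cap\{|x|\ge R_0\}$ carries the Euclidean cone metric $dr^2+r^2 g_\Gamma$ with $\dim\Gamma=n-2$; hence the Euclidean Laplacian on $\Sigma$ applied to a radial function reduces to the ODE operator $\Delta_\delta F = F''+\tfrac{N-1}{r}F'$ (note $N-1=n-2=\dim\Gamma$), which is exactly the expression already written down just before the statement. Second, I would write $F=h^{\alpha}$ with $h(r)=1+\tfrac{m}{2r^{n-2}}$ and $\alpha=\tfrac{N-2}{n-2}=\tfrac{n-3}{n-2}$, and abbreviate $a=\tfrac{m}{2r^{n-2}}$, so that $h=1+a$, $rh'=-(n-2)a$ and $r^2h''=(n-1)(n-2)a$. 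Differentiating $F=h^\alpha$ twice gives $F^{-1}\Delta_\delta F=\alpha h^{-2}\bigl[(\alpha-1)(h')^2+hh''+\tfrac{N-1}{r}hh'\bigr]$, and substituting the formulas for $h',h''$ turns the bracket into $\tfrac{a}{r^2}\bigl[(\alpha-1)(n-2)^2 a+(n-2)(1+a)\bigl((n-1)-(N-1)\bigr)\bigr]$.

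The only thing requiring attention is the two cancellations forced by the dimension bookkeeping $N=n-1$. Since $(n-1)-(N-1)=1$ the last summand collapses, and since $\alpha-1=-\tfrac{1}{n-2}$ one has $(\alpha-1)(n-2)^2 a=-(n-2)a$, so the bracket reduces to $\tfrac{(n-2)a}{r^2}$. Thus $F^{-1}\Delta_\delta F=(n-3)\,\tfrac{a}{r^2(1+a)^2}$; multiplying by $\tfrac{N}{N-2}=\tfrac{n-1}{n-3}$ and rewriting $\tfrac{a}{(1+a)^2}=\tfrac{2mr^{n-2}}{(m+2r^{n-2})^2}$ yields the claimed identity, and substitution into \eqref{Relation} finishes the proof. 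There is no genuine obstacle here — the argument is entirely elementary once Lemma \ref{LemRelation} is in hand — and the point worth emphasizing is precisely that these cancellations convert the unwieldy conformal-factor term into the clean, manifestly nonnegative weight $\tfrac{2r^{n-4}}{(m+2r^{n-2})^2}$ that will be exploited when comparing the Euclidean and Schwarzschild index forms in the sequel.
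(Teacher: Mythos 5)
Your proposal is correct and follows the same route as the paper: substitute into Lemma \ref{LemRelation} and evaluate the radial term $\tfrac{N}{N-2}F^{-1}\Delta_\delta F$ using $\Delta_\delta F=F''+\tfrac{N-1}{r}F'$ on the cone, obtaining $(n-1)m\tfrac{2r^{n-4}}{(m+2r^{n-2})^2}$. The only difference is that you spell out the computation (via $F=h^\alpha$, $h=1+\tfrac{m}{2r^{n-2}}$) that the paper states in a single displayed line, and your algebra checks out.
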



\subsection{Euclidean index form over cones}

In this part we follow the seminal work of J. Simons \cite{Simons}. On the one hand, fix $R_{0} <R$ and consider the following initial value problem on the interval $[R_{0},R]$:
$$
\text{(IVP)} \qquad \left\{ \begin{matrix} 
-r^2\dfrac{d^2g}{dr^2}-(n-2)r\dfrac{dg}{dr} = \beta \, g & \text{ in } (R_{0} ,R) , \\[3mm]
g(R_{0}) = 0 = g(R) .& 
\end{matrix}\right.
$$

From \cite[Lemma 6.1.5]{Simons}, for each $j\in \mathbb{N}$, the function 
\[
g_j(r)= c_{j}r^{-\frac{(n-3)}{2}}\sin \left(\frac{j\pi}{\log (R/R_{0})}\log(r/R_{0}) \right), \text{ where } c_{j}^{-2}= \frac{\log (R/R_{0})}{2 j} , 
\]solves (IVP); that is, 
\[
-r^2\frac{d^2g_{j}}{dr^2}-(n-2)r\frac{dg_{j}}{dr}=\beta_j \, g_{j}\,, \text{ where }
\beta_j=\left(\frac{n-3}{2}\right)^2 + \left(\frac{j\pi}{\log (R/R_{0})}\right)^2\,.
\]

Moreover, let $C_0^{\infty}([R_0,R])$ denote the space of smooth functions on $[R_0,R]$ which vanish at the end points. Then, we can obtain a basis on this space by eigenfunctions $\{g_j\}$ of (IVP), with eigenvalue $\beta _{j}$, which are orthonormal with respect to the $L^2([R_0,R],t^{n-4}dt)$-norm. 

On the other hand (cf. \cite[Lemma 6.1.4]{Simons}), given $\Gamma \subset \s^{n-1}$ a minimal hypersurface, consider the Jacobi operator
$$
\text{(J)} \qquad  J_\Gamma f:= -\Delta_{\Gamma} f -|A_{\Gamma}|^2 f ;
$$and denote by $f_i$, $i\in \mathbb{N}$, the eigenfunctions of the above operator with eigenvalue $\lambda_{i}$. Then, we can obtain a base on the space of smooth functions on $\Gamma$, $C^{\infty}(\Gamma)$, by eigenfunctions $\{f_i\}$, with eigenvalue $\lambda _{i}$, which are orthonormal with respect to the $L^2(\Gamma)$-norm.

Thus, given  $\psi(p, t) \in C_0 ^\infty (\Sigma (R))$, where $ C_0 ^\infty (\Sigma (R))$ is the space of smooth functions which vanish on $\partial \Sigma(R)$, we obtain that $\psi$ has an unique expansion (cf. \cite[Lemma 6.1.6]{Simons}) as
\[
\psi(p,t)=\sum\limits_{i,j=1}a_{ij}f_i(p)g_j(t)\,,
\]
and, using \cite[Lemma 6.1.3]{Simons}, we obtain
\begin{equation}\label{Simons}
Q_{\delta}(R)(\psi,\psi)=\sum\limits_{i,j=1}a^2_{ij}(\lambda_i+\beta_j ).
\end{equation}


\subsection{Spectrum of $\mathbb{J}_{\delta}$ in spherical coordinates}\label{Spectrum}

Following \cite{Simons}, using separation of variables $\psi (p,r) = f(p)u(r)$, the expression of the Laplace operator $\Delta _\delta$ on $\Sigma$ in spherical coordinates given by
$$ \Delta _\delta = \frac{\partial ^2}{\partial r ^2} +\frac{N-1}{r}\frac{\partial}{\partial r} +\frac{1}{r^2}\Delta _\Gamma ,$$ and $r^2 |A_{(\Sigma ,\delta)}|^2 = |A_\Gamma|^2$; a function $\psi(p, t) \in C_0 ^\infty (\Sigma (R))$ solution to 
$$\mathbb{J}_{\delta}(\psi)=-\lambda \psi F^{\frac{4}{N-2}},$$where $\mathbb{J}_\delta$ is given by \eqref{J}, must satisfy that 
\begin{itemize}
\item $f \in C^{\infty}(\Gamma)$ belongs to the spectrum of $J_\Gamma$; with eigenvalues $\lambda_k(\Gamma)$, $k\in \mathbb{N}$.

\item $u \in C^{\infty}([R_{0} ,R])$ is a solution to $L_k(u)= -\lambda F^{\frac{4}{N-2}}u$, where $L_k$ is a family (indexed by $k\in \mathbb{N}$) of Sturm-Liouville operators defined by
\[
L_k:=\frac{d^2}{dr^2}+\frac{N-1}{r}\frac{d}{dr}+V(r)  -\frac{\lambda_{k}(\Gamma)}{r^2}, 
\]
where 
$$V(r)=\frac{N}{N-2}\left( F^{-1}\Delta_{\delta} F\right)=\frac{m(n-1)}{2r^n}\left(\frac{2r^{n-2}}{m+2r^{n-2}} \right)^{2}.$$
\end{itemize}

Hence,
\begin{equation}\label{Lk}
L_k:=\frac{d^2}{dr^2}+\frac{N-1}{r}\frac{d}{dr} + \frac{m(n-1)}{2r^n}\left(\frac{2r^{n-2}}{m+2r^{n-2}} \right)^{2}  -\frac{\lambda_{k}(\Gamma)}{r^2}.
\end{equation}

If we consider the change $v (r)=r^{\frac{N-1}{2}}u (r)$, we obtain 
\[
L_k u = r^{-\frac{N-1}{2}}\left(\frac{d^2v_k}{dr^2}+W_{k} v_k\right), 
\]
where
\begin{equation}\label{Wk}
W_{k}=\frac{m(n-1)}{2r^n}\left(\frac{2r^{n-2}}{m+2r^{n-2}} \right)^{2} -\frac{4\lambda_{k}(\Gamma)+(n-2)(n-4)}{4r^2}.
\end{equation}

\begin{lemma}\label{LemV}
The positive function 
\begin{equation}\label{v}
v(r) := \left(\frac{2r^{n-2}}{m+2r^{n-2}}\right)^{\frac{1}{n-2}}, \quad r \geq R_{0},
\end{equation}satisfies 
\begin{equation}\label{Lv}
Lv := \frac{d^2 \,v }{dr^2}+\frac{m(n-1)}{2r^n} \left(\frac{2r^{n-2}}{m+2r^{n-2}} \right)^{2} v = 0 .
\end{equation}
\end{lemma}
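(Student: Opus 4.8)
The claim is a direct verification: I must check that the explicitly given positive function
\[
v(r) = \left(\frac{2r^{n-2}}{m+2r^{n-2}}\right)^{\frac{1}{n-2}}
\]
is annihilated by the ordinary differential operator
\[
L = \frac{d^2}{dr^2} + \frac{m(n-1)}{2r^n}\left(\frac{2r^{n-2}}{m+2r^{n-2}}\right)^2.
\]
The cleanest route is to compute $v'$ and $v''$ and substitute. First I would write $v = \left(\tfrac{2r^{n-2}}{m+2r^{n-2}}\right)^{1/(n-2)}$ and introduce the abbreviation $\mu(r) := m + 2r^{n-2}$, so that $v = (1 - m/\mu)^{1/(n-2)}$ and $\mu' = 2(n-2)r^{n-3}$. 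Logarithmic differentiation is the most economical: from $\log v = \tfrac{1}{n-2}\big(\log(2r^{n-2}) - \log\mu\big)$ one gets
\[
\frac{v'}{v} = \frac{1}{n-2}\left(\frac{n-2}{r} - \frac{\mu'}{\mu}\right) = \frac{1}{r} - \frac{2r^{n-3}}{\mu} = \frac{m}{r\,\mu}.
\]
Thus $v' = \dfrac{m\,v}{r\,\mu}$, a convenient closed form.

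**Second step: the second derivative.** Differentiating $v' = m v/(r\mu)$ and substituting $v' = mv/(r\mu)$ again yields
\[
v'' = \frac{m v'}{r\mu} - \frac{m v}{r^2\mu} - \frac{m v\,\mu'}{r\mu^2}
= \frac{m^2 v}{r^2\mu^2} - \frac{m v}{r^2\mu} - \frac{2(n-2)m\,r^{n-4}\,v}{\mu^2}.
\]
Now I would factor out $mv/(r^2\mu^2)$:
\[
v'' = \frac{m v}{r^2\mu^2}\Big(m - \mu - 2(n-2)r^{n-2}\Big)
= \frac{m v}{r^2\mu^2}\Big(m - (m+2r^{n-2}) - 2(n-2)r^{n-2}\Big)
= \frac{m v}{r^2\mu^2}\cdot\big(-2(n-1)r^{n-2}\big).
\]
Hence $v'' = -\dfrac{2(n-1)m\,r^{n-4}\,v}{\mu^2}$.

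**Third step: assembling $Lv$.** It remains to observe that the zeroth-order coefficient of $L$, rewritten with $\mu = m+2r^{n-2}$, is
\[
\frac{m(n-1)}{2r^n}\cdot\frac{(2r^{n-2})^2}{\mu^2} = \frac{m(n-1)}{2r^n}\cdot\frac{4r^{2(n-2)}}{\mu^2} = \frac{2(n-1)m\,r^{n-4}}{\mu^2},
\]
which is exactly $-v''/v$. Therefore $Lv = v'' + \dfrac{2(n-1)m\,r^{n-4}}{\mu^2}\,v = 0$, as desired; and positivity of $v$ on $[R_0,\infty)$ is immediate since $2r^{n-2}/(m+2r^{n-2}) > 0$ there.

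**On the main obstacle.** There is essentially no conceptual obstacle here — the statement is an identity and the proof is the two-line differentiation above, with the only mild care needed in tracking the powers of $r$ and the factor $\mu = m+2r^{n-2}$ through the algebra. If anything, the point worth flagging is the \emph{origin} of this particular $v$: it is, up to the obvious rescaling, the conformal factor $F^{-2/(n-2)}$ (equivalently $f^{-1}$, with $f$ as in \eqref{Factorf}) evaluated along the cone, which is why $Lv=0$ is forced by \eqref{LaplF} when $\lambda_k(\Gamma)=0$ — i.e. $v$ is the ground state of the radial operator $L = L_0$ restored from $W_0$ by the substitution $v_0 = r^{(N-1)/2}u_0$ with $u_0 = f^{-1}$. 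Recognizing this gives an a priori reason the formula must be correct, but for the proof itself the bare computation is quickest and I would simply carry it out.
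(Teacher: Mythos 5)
Your verification is correct and is essentially the paper's own proof: the paper likewise just records the closed forms of $v'$ and $v''$ (equivalent to your $v'=mv/(r\mu)$ and $v''=-2(n-1)mr^{n-4}v/\mu^{2}$ with $\mu=m+2r^{n-2}$) and observes they cancel the zeroth-order term. The only inaccuracy is in your motivational aside, which does not affect the proof: along the cone one has $f^{-1}=v^{2}$, so $v=f^{-1/2}$ rather than (a rescaling of) $f^{-1}$; the function actually fed into the Fischer--Colbrie criterion in the paper is $\psi=F^{-1}r^{-(N-1)/2}v=\frac{2r^{(n-2)/2}}{m+2r^{n-2}}$.
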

\begin{proof}
A straightforward computation shows that 
$$ v' (r) = \frac{m}{2r^{n-1}}\left(\frac{2r^{n-2}}{m+2r^{n-2}} \right)^{\frac{1}{n-2}+1} \text{ and } 
	v''(r) = -\frac{(n-1)m}{2r^n} \left(\frac{2r^{n-2}}{m+2r^{n-2}} \right)^{\frac{1}{n-2} +2} ,$$which shows the lemma. 
\end{proof}


\section{Proof of Theorem \ref{main1}}

Assume that $\Gamma$ is totally geodesic, hence $\lambda_1(\Gamma) = 0$. For $n\geq 4$, \eqref{Wk} implies 
$$ W_{1} \leq \frac{m(n-1)}{2r^n}\left(\frac{2r^{n-2}}{m+2r^{n-2}} \right)^{2} ,$$hence Lemma \ref{LemV}, \eqref{Lk} and \eqref{Lv} imply that $u(r)=r^{-\frac{N-1}{2}}v(r)$ satisfies 
$$ L_1 u = r^{-\frac{N-1}{2}}\left(\frac{d^2v}{dr^2}+W_{1} v \right) \leq r^{-\frac{N-1}{2}} Lv = 0 , $$where $v$ is given by \eqref{v}. Hence, the function 
$$\psi (p,r) = F^{-1} (r)u(r) = \frac{2 r^{\frac{n-2}{2}}}{m+2r^{n-2}} $$ is a positive function on $\Sigma$ such that $J_\Sigma (\psi) \leq 0 $. Also, a straightforward computation shows 
$$ \frac{\partial \psi}{\partial r}(p,r) = \frac{(n-2) r^{\frac{n}{2}-2}}{\left(m+2r^{n-2} \right) ^2} \left(m-2r^{n-2}\right) .$$

Since $2 R_0 ^{n-2} = m$, we can check that $\dfrac{\partial \psi }{\partial r} (p, R_0) = 0$. Therefore, the Fischer-Colbrie Criterion, Lemma \ref{LemFC}, implies that $\Sigma $ is stable. This proves Theorem \ref{main1}.

\subsection{Proof of Theorem \ref{main1a}}

In the case that $\Gamma$ is the Clifford torus, we know that $\lambda _{1} (\Gamma)= -(n-2)$. Hence, for $n\geq 8$, Lemma \ref{LemV}, \eqref{Lk}, \eqref{Wk} and \eqref{Lv} imply that $ L_1 u \leq 0$, $u(r)=r^{-\frac{N-1}{2}}v(r)$ where $v$ is given by \eqref{v}. Hence, using the Fischer-Colbrie Criterion we can show that $\Sigma$ is stable as above. This proves Theorem \ref{main1a}.


\subsection{Proof of Theorem \ref{newMAIN}}

In this case, the condition $4\lambda _1 (\Gamma) + (n-2)(n-4) \geq 0$ implies that $L_1 u \leq 0$, $u(r)=r^{-\frac{N-1}{2}}v(r)$ where $v$ is given by \eqref{v}. Thus, following the above ideas we can prove Theorem \ref{newMAIN}. 


\section{Proof of Theorem \ref{main2}}

Now, let $\Gamma \subset \s ^{n-1}$ be a compact minimal hypersurface in the $(n-1)-$dimensional sphere. Fix $R>R_{0}$ and consider $\psi _{j} (p,r):= f_{1} (p) g_{j} (r)$, $x= (p,r) \in \Gamma \times [R_{0},+\infty) = C_{\Gamma}$, which vanishes on $\partial \Sigma (R)$. It is well-known that the first eigenvalue of (J) satisfies $\lambda _{1} \leq -(n-2)$ and 
\[
\beta_j=\left(\frac{n-3}{2}\right)^2 + \left(\frac{j\pi}{\log (R/R_{0})}\right)^2\,,  \text{ for every }j\in \mathbb{N}.
\]

Hence, Lemma \ref{LemRelation2} and \eqref{Simons} imply
$$Q^g_R(F^{-1}\psi_{j},F^{-1}\psi_{j}) = \left(\lambda _{1} +\beta_j\right) -(n-1)m\int_{\Sigma(R)}\frac{2r^{n-4}}{\left(m+2r^{n-2}\right)^{2}} \psi_{j}^2 d\delta .
$$

Using Fubbini and the expression of $d\delta = r^{n-2} dr d\Gamma$ as a product metric, $d\Gamma$ the volume element of $\Gamma \subset \s^{n-1}$, we get 
\[
Q^g_R(F^{-1}\psi_{j},F^{-1}\psi_{j}) = \left(\lambda _{1} +\beta_j\right) + \widetilde G_{j}(R),
\]where
$$ \widetilde G_{j}(R)  = -(n-1) m c_j^2\int_{R_{0}}^{R} \frac{2r^{n-2}}{\left(m+2r^{n-2}\right)^{2}} \sin ^2\left(\frac{j\pi \log(r/R_{0})}{\log (R/R_{0})} \right) \frac{dr}{r} .$$

Consider the change of variable 
$$s= \frac{\pi}{\log (R/R_{0})}\log(r/R_{0}) \text{ and } \frac{2a(R)}{(n-2)\pi} ds =  \frac{dr}{r};$$where 
\begin{equation}\label{aR}
a(R) = \frac{(n-2)\log (R/R_{0})}{2 \pi},
\end{equation}
then 
$$ \widetilde G_{j}(R):= \frac{(n-1)j}{2\pi}\int_{0}^{\pi} \cosh ^{-2} (a(R)s) \sin^{2}(js) \, ds .$$

Hence, following the exact same computations as above we achieve 

$$ Q^g_R(F^{-1}\psi_{j},F^{-1}\psi_{j}) = \lambda _{1} + \left(\frac{n-3}{2}\right)^2 + G_{j}(R) ,$$where
$$ G_{j}(R):= \left(\frac{(n-2)j}{2a(R)}\right)^2 - \frac{(n-1)j}{2\pi}\int_{0}^{\pi} \cosh ^{-2} (a(R) j s) \sin ^{2} \left(s \right) \, ds .$$

Therefore, if $4 \leq n\leq 7$ then $\lambda _{1} + \left(\frac{n-3}{2}\right)^2 <0$ and, since $G_{j}(R) \to 0 $ as $R\to +\infty$ for all $j \geq 1$, we obtain that 
$$Q_{\Sigma}(R)(F^{-1}\psi_{j},F^{-1}\psi_{j}) <0 \text{ for all } j\geq 1 \text{ and } R \text{ large enough depending on } j,$$that is, ${\rm Ind}_{D}(\Sigma (R)) \to +\infty$ as $R\to +\infty$, since the $\psi_{j}'s$ are linearly independent. This and Claim B prove Theorem \ref{main2}.


\subsection{Proof of Theorem \ref{main3}}

This result follows from Theorem \ref{main2}.


\section{Density over minimal cones}

In this section, following ideas of \cite{SBre13,RaMo}, we represent the Schwarzschild manifold as $M^n=\mathbb{S}^{n-1} \times (s_0, +\infty)$, $n\geq3$, endowed with the metric 
\[
g_{Sch}=\frac{1}{1-2ms^{2-n}}ds^2+s^2g_{\mathbb{S}^{n-1}}\,,
\]
where $s_0=(2m)^{\frac{1}{n-2}}$. We define a continuous function $F:[s_0, +\infty)\rightarrow\mathbb{R}$ by $F'(s)=\frac{1}{\sqrt{1-2ms^{2-n}}}$ and $F(s_0)=0$. Making the change  $r=F(s)$, the metric $g_{Sch}$ can be rewritten as
\[
g_{Sch}=dr^2+h^2(r)g_{\mathbb{S}^{n-1}}\,,
\]
where $h:[0,+\infty)\rightarrow [s_0,+\infty)$ denotes the inverse of $F$. Hence, from \cite[Section 5]{SBre13}, we obtain 
\[
h'(r)=\sqrt{1-2ms^{2-n}}\,,
\]
where $s=h(r)$. The variable $r=r(x)$ represents the Schwarzschild distance to the
horizon $\partial M$. Moreover, consider the function $f (x)=h' (r(x))$ and the conformal vector field
\[
X = h(r)\partial_r,
\]
where $\partial_r$ denote the unit length radial vector, i.e., the gradient, with respect to the Schwarszchild metric $g_{Sch}$, of the function $r$. Using this explicit expression of $X$, we can obtain the divergence of $X$, on $\Sigma$, with respect to the metric $g_{Sch}$:
\[
div_{\Sigma}(X)=(n-1)f\,.
\]

In fact, following \cite[Section 2]{SBre13}, we can write $X=\nabla_{\Sigma}\varphi$, for some function $\varphi$ such that $Hess_{\Sigma}\varphi=f g$. Hence, 
\[
div_{\Sigma}X=\sum\limits_{k=1}^{n-1}Hess_{\Sigma}\varphi(E_k,E_k)-Hg(X,N)=(n-1)f\,,
\]
where $\{E_1,..., E_{n-1}\}$ is an orthonormal basis of the tangent space to $\Sigma$ and $H=0$. The function $f$ is nothing but the static potential associated to the Schwarschild manifold.

Let $\Sigma$ be a properly embedded free boundary minimal hypersurface in $(M^n,g_{Sch})$. Let $B_{\rho}$ denote the set of points in $(M^n,g_{Sch})$ at Schwarzschild distance to the horizon at most $\rho$. 

\begin{definition}
Let $\Gamma$ be a closed minimal hypersurface in the Euclidean unit sphere $\mathbb{S}^{n-1}$. We define the $\Gamma$-density at infinity of a properly embedded free boundary minimal hypersurface $\Sigma$ in the Schwarzschild manifold by
\[
\Theta_{\Gamma}(\Sigma):=\lim\limits_{\rho\rightarrow +\infty}\frac{vol(\Sigma\cap B_{\rho})}{vol(\Sigma_{\Gamma}\cap B_{\rho})}
\]
whenever this limit exists, where $\Sigma_{\Gamma}$ denotes the cone over $\Gamma$.
\end{definition}

Observe that when $n =3$ the only minimal cone is the one given by a great circle in $\mathbb{S}^2$. In our case, in higher dimension, we have a plethora of minimal cones in order to consider a density. Denote by $|S|$ the volume of a hypersurface $S$ in the Euclidean unit sphere $\mathbb{S}^{n-1}$. With this notation, we can announce the following result obtained by R. Montezuma \cite{RaMo} when $n=3$. 

\begin{theorem}
If $\Theta _\Gamma (\Sigma)$ exists and is finite; the following formula is valid:
\begin{equation}\label{ThetaArea}
\Theta_{\Gamma}(\Sigma)=\frac{area(\partial\Sigma)}{2m|\Gamma|}+\frac{n-1}{|\Gamma|}\int_{\Sigma}\frac{f}{h^{n-1}(r)}|\partial_r^{\perp}|^2_gdv\,.
\end{equation}
\end{theorem}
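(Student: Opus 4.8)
The plan is to compute $\Theta_\Gamma(\Sigma)$ by applying the divergence theorem to the conformal vector field $X = h(r)\partial_r$ on the truncated piece $\Sigma \cap B_\rho$, exactly in the spirit of Brendle's work \cite{SBre13} and Montezuma's three-dimensional argument \cite{RaMo}. First I would recall that $\mathrm{div}_\Sigma X = (n-1)f$, so that
\[
(n-1)\int_{\Sigma \cap B_\rho} f\, dv = \int_{\partial(\Sigma \cap B_\rho)} g_{Sch}(X,\eta)\, ds,
\]
where $\eta$ is the outward unit conormal along $\partial(\Sigma\cap B_\rho)$. The boundary $\partial(\Sigma\cap B_\rho)$ splits into two pieces: the portion lying on the horizon $\partial M$ (where $r=0$, hence $X = h(0)\partial_r = s_0 \partial_r$) and the portion lying on the level set $\{r = \rho\}$. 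On the horizon piece, the free boundary condition says $\Sigma$ meets $\partial M$ orthogonally, so the outward conormal $\eta$ of $\partial\Sigma$ in $\Sigma$ agrees with $-\partial_r$ along $\partial M$ wait—more precisely $\eta$ points into $M$ along $\partial M$... I need to be careful: along $\partial\Sigma \subset S_0$ the conormal $\nu$ is tangent to $\partial M$ and points outward, but $X = h(r)\partial_r$ at $r=0$ is normal to $\partial M$; since $\Sigma$ meets $\partial M$ orthogonally, $\partial_r$ restricted to $\Sigma$ along $\partial\Sigma$ is $\pm\nu$, and one checks the sign so that $g_{Sch}(X,\nu) = -h(0) = -s_0 = -(2m)^{1/(n-2)}$. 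Hence the horizon contribution to the flux is $-s_0\,\mathrm{area}(\partial\Sigma)$, and raising this to the right normalization (the factor $2m = s_0^{n-2}\cdot\text{something}$) will produce the term $\frac{\mathrm{area}(\partial\Sigma)}{2m|\Gamma|}$ after dividing by $\mathrm{vol}(\Sigma_\Gamma \cap B_\rho)$.

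Next I would analyze the flux through $\{r=\rho\}$. There $\eta$ is the outward conormal; decompose $\partial_r = \partial_r^\top + \partial_r^\perp$ into tangential and normal parts along $\Sigma$. Along $\{r=\rho\}\cap\Sigma$, the outward conormal $\eta = \partial_r^\top/|\partial_r^\top|$, so $g_{Sch}(X,\eta) = h(\rho)\,|\partial_r^\top|$. Writing $|\partial_r^\top|^2 = 1 - |\partial_r^\perp|^2$ and expanding to first order in $|\partial_r^\perp|^2$, the leading term of the flux through $\{r=\rho\}$ is $h(\rho)\,\mathrm{area}_{g_{Sch}}(\Sigma\cap\{r=\rho\})$ and the correction involves $|\partial_r^\perp|^2$. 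The key identity relating the "level-set area flux at $\rho$" to the enclosed volume is the coarea-type formula: using the metric $g_{Sch} = dr^2 + h^2(r)g_{\mathbb{S}^{n-1}}$, one has $\frac{d}{d\rho}\mathrm{vol}(\Sigma\cap B_\rho) = \int_{\Sigma\cap\{r=\rho\}}\frac{1}{|\partial_r^\top|}\,ds$ so the ODE for $\mathrm{vol}(\Sigma\cap B_\rho)$ couples to the flux; and for the model cone $\Sigma_\Gamma$ (which is radial, $\partial_r^\perp \equiv 0$) one computes $\mathrm{vol}(\Sigma_\Gamma\cap B_\rho) = |\Gamma|\int_0^\rho h^{n-2}(r)\,dr$ explicitly. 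Dividing the divergence identity for $\Sigma$ by $\mathrm{vol}(\Sigma_\Gamma\cap B_\rho)$, passing to the limit $\rho\to\infty$ using that $\Theta_\Gamma(\Sigma)$ exists and is finite, and tracking that $f = h'(r) \to 1$ while $h(r) \sim r$ at infinity, the leading flux terms combine to give $(n-1)\Theta_\Gamma(\Sigma)\cdot\frac{1}{|\Gamma|}\cdot(\text{const})$ on one side and the horizon term plus the $|\partial_r^\perp|^2$ integral on the other.

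The bookkeeping that produces the precise coefficients $\frac{1}{2m}$ and $n-1$, and the weight $\frac{f}{h^{n-1}(r)}$ in the bulk integral, is where I would be most careful. The cleanest route is probably not to take $\rho\to\infty$ naively in the divergence theorem, but rather to run the argument as in \cite[proof of the density formula]{RaMo}: apply the divergence theorem on the annular region $\Sigma\cap(B_{\rho}\setminus B_{\rho_0})$ for the \emph{modified} vector field $X/h^{n-1}$ or a suitable rescaling whose divergence on $\Sigma$ is exactly $(n-1)f/h^{n-1}(r)$ times the part involving $|\partial_r^\perp|^2$, so that the bulk term $\frac{n-1}{|\Gamma|}\int_\Sigma \frac{f}{h^{n-1}(r)}|\partial_r^\perp|_g^2\,dv$ appears directly rather than as a limit of error terms. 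Indeed, one checks $\mathrm{div}_\Sigma(h^{1-n}X) = h^{1-n}(n-1)f - (n-1)h^{1-n}f|\partial_r^\perp|^2 + (\text{total derivative along } r)$, and the boundary flux of $h^{1-n}X$ through $\{r=\rho\}$ tends exactly to $(n-1)\Theta_\Gamma(\Sigma)/$(normalization) as $\rho\to\infty$ because $h^{1-n}(\rho)h(\rho) = h^{2-n}(\rho)\to 0$ controls the growth while the level-set area grows like $h^{n-2}(\rho)$. The main obstacle, then, is getting the decomposition of $\mathrm{div}_\Sigma(h^{1-n}X)$ right — i.e., verifying that the "total derivative" piece integrates against the $\{r=\rho\}$ and $\{r=\rho_0=0\}$ boundaries to produce precisely the horizon term $\frac{\mathrm{area}(\partial\Sigma)}{2m|\Gamma|}$ (using $h(0)=s_0$, $s_0^{n-2}=2m$) and the density term, with no leftover. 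Once that algebraic identity is in hand, taking $\rho\to\infty$ and dividing by $\mathrm{vol}(\Sigma_\Gamma\cap B_\rho) = |\Gamma|\int_0^\rho h^{n-2}$ finishes the proof.
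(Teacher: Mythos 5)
Your overall strategy is the same one the paper uses (the divergence theorem on $\Sigma$ applied to the conformal field $X=h(r)\partial_r$ rescaled by $h^{1-n}$, with the free boundary condition and $h(0)^{n-2}=2m$ producing the $\frac{area(\partial\Sigma)}{2m}$ term), but as written there are two problems, one cosmetic and one substantive. The cosmetic one: your decomposition of $\operatorname{div}_\Sigma(h^{1-n}X)$ is wrong as stated. A direct computation gives the clean identity
\begin{equation*}
\operatorname{div}_\Sigma\bigl(h^{1-n}(r)X\bigr)=(n-1)\,\frac{f}{h^{n-1}(r)}\,|\partial_r^{\perp}|^2 ,
\end{equation*}
with a plus sign and with no leftover ``total derivative along $r$'' term; this actually works in your favor, and is the computation underlying the paper's formula for $\operatorname{div}_\Sigma W$ on the annulus.

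The substantive gap is the limiting step. With your field, the divergence theorem on $\Sigma\cap B_\rho$ yields the flux $\Phi(\rho)=h^{2-n}(\rho)\int_{\Sigma\cap\{r=\rho\}}|\partial_r^{\top}|\,ds$ on the outer boundary, and you assert that $\Phi(\rho)\to|\Gamma|\,\Theta_\Gamma(\Sigma)$ ``because $h^{2-n}(\rho)\to0$ while the level-set area grows like $h^{n-2}(\rho)$.'' That is precisely what has to be proved, and it does not follow from the mere existence of the volume-ratio limit $\Theta_\Gamma(\Sigma)$: the flux is a derivative-type quantity (existence of a limit of ratios of volumes does not give convergence of the corresponding ratio of slice integrals), and moreover the coarea formula produces $\int_{\{r=\rho\}}|\partial_r^{\top}|^{-1}ds$, not $\int_{\{r=\rho\}}|\partial_r^{\top}|\,ds$; comparing the two costs another $|\partial_r^{\perp}|^2$-type error. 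As sketched, your argument only yields the inequality $|\Gamma|\Theta_\Gamma(\Sigma)\geq\frac{area(\partial\Sigma)}{2m}+(n-1)\int_\Sigma\frac{f}{h^{n-1}}|\partial_r^{\perp}|^2\,dv$ (Corollary~\ref{CorDensity}), not the equality claimed in the theorem. The paper avoids this issue by a truncation: it uses the Lipschitz field $W$ equal to $\bigl(h^{1-n}(r)-h^{1-n}(\rho)\bigr)X$ on $\{\sigma\leq r\leq\rho\}$, constant-coefficient inside, and zero for $r\geq\rho$, so that no flux through $\{r=\rho\}$ ever appears; instead the density enters through the bulk term $\frac{1}{h^{n-1}(\rho)}\int_{\Sigma\cap B_\rho}f\,dv$, i.e.\ a weighted volume ratio, which converges to $\frac{|\Gamma|}{n-1}\Theta_\Gamma(\Sigma)$ directly from the definition of $\Theta_\Gamma(\Sigma)$ (using $f=h'\to1$ and $vol(\Sigma_\Gamma\cap B_\rho)=|\Gamma|\int_0^\rho h^{n-2}$). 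To make your route rigorous you would either have to adopt this truncation or supply a separate argument that $\Phi(\rho)$ (which is monotone by the positivity of the bulk term, hence has a limit) actually equals the density limit.
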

\begin{proof}
We follow \cite[Section 3]{RaMo} with minor changes due to the dimension. For every $0 <\sigma<\rho$, consider the vector field $W$ defined by the expression
\[
W(x) :=\left\{ \begin{matrix}
\left( \frac{1}{h^{n-1}(\sigma)}-\frac{1}{h^{n-1}(\rho)}\right)X(x)  & \text{ if } x\in M \text{ with }0 \leq r(x) \leq \sigma , \\[3mm]
\left( \frac{1}{h^{n-1}(r)}-\frac{1}{h^{n-1}(\rho)}\right)X(x) &  \text{ if } x\in M \text{ with } \sigma \leq r(x)\leq \rho , \\[3mm]
0 & \text{ otherwise.}
\end{matrix}\right.
\]

Hence, we have
\[
div_{\Sigma}W(x)=\left\{ \begin{matrix}
\left( \frac{1}{h^{n-1}(\sigma)}-\frac{1}{h^{n-1}(\rho)}\right)(n-1)f & \text{ for }0 \leq r(x) < \sigma , \\[3mm]
\frac{(n-1)f}{h^{n-1}(\rho)}+\frac{(n-1)f}{h^{n-1}(r)}|\partial_r^{\perp}|^2_g  & \text{ for } \sigma < r(x) < \rho ,
\end{matrix}\right.
\]where $\partial_r^{T}$ and $\partial_r^{\perp}$ denote the tangential and normal components of $\partial_r$, respectively, relative to the tangent spaces of $\Sigma$. Note that $div_{\Sigma}W(x)=0$ if $r(x)>\rho$.

Therefore, for almost all $0 < \sigma < \rho$, we obtain
\begin{equation*}
\begin{split}
\frac{1}{h^{n-1}(\rho)}\int_{\Sigma_{\rho}}fdv&=\frac{1}{h^{n-1}(\sigma)}\int_{\Sigma_{\sigma}}fdv +\int_{\Sigma_{\rho}\setminus\Sigma_{\sigma}}\frac{f}{h^{n-1}(r)}|\partial_r^{\perp}|^2_gdv\\
&\qquad -\frac{1}{n-1}\int_{\partial \Sigma}g(W^T,\nu)ds\,,
\end{split}
\end{equation*}
where $W^{T}$ denotes the tangential component of $W$. Here, we have used that the divergence over $\Sigma$
of the normal component of $W$ vanishes, since $\Sigma$ is a free boundary minimal hypersurface.

Note that, since $\Sigma$ is free boundary, it follows that $W$ is tangential to $\Sigma$ and $\nu=-\partial_r$. Then,
\[
g(W^T,\nu)=-\left( \frac{1}{h^{n-1}(\sigma)}-\frac{1}{h^{n-1}(\rho)}\right)h(0)=-(2m)^{\frac{1}{n-2}}\left( \frac{1}{h^{n-1}(\sigma)}-\frac{1}{h^{n-1}(\rho)}\right)\,,
\]
and, consequently, for $0 \leq \sigma < \rho$, we have

\begin{equation}\label{monotonicity}
\begin{split}
\frac{\mu(\Sigma\cap B_{\rho})}{h^{n-1}(\rho)}&=\frac{\mu(\Sigma\cap B_{\sigma})}{h^{n-1}(\sigma)}
+\int_{\Sigma_{\rho}\setminus\Sigma_{\sigma}}\frac{f}{h^{n-1}(r)}|\partial_r^{\perp}|^2_gdv\\  
&\qquad +\frac{(2m)^{\frac{1}{n-2}}}{n-1}\left( \frac{1}{h^{n-1}(\sigma)}-\frac{1}{h^{n-1}(\rho)}\right)area(\partial\Sigma)\,, 
\end{split}
\end{equation}
where $area(\partial\Sigma)$ represents the area of the boundary $\partial\Sigma$, and $\mu$ is the measure defined
by $\mu(A)=\int_Af$. Now, if $\Theta_{\Gamma}(\Sigma)$ exists, we can check that
\[
\lim\limits_{\rho\rightarrow +\infty}\frac{area(\Sigma\cap B_{\rho})}{area(C_{\Gamma}\cap B_{\rho})}=\lim\limits_{\rho\rightarrow +\infty}\frac{(n-1)area(\Sigma\cap B_{\rho})}{|\Gamma|h^{n-1}(\rho)}\,.
\]

Therefore,  assuming that $\Theta_{\Gamma}(\Sigma)$ exists, we let  $\sigma=0$ and $\rho\rightarrow+\infty$ in the identity \eqref{monotonicity} to conclude that \eqref{ThetaArea} holds.
\end{proof}

Since the integral term at the right hand side of \eqref{ThetaArea} is non-negative, we obtain: 
\begin{corollary}\label{CorDensity}
The area of the boundary of $\Sigma$ satisfies
\begin{equation}\label{CorEquality}
area(\partial \Sigma)\leq 2m|\Gamma|\Theta_{\Gamma}(\Sigma)\,.
\end{equation}
Moreover, equality holds if and only if $\Sigma$ is a minimal cone; in such case
$$ |\Gamma| \Theta_{\Gamma}(\Sigma)  = |R_0 ^{-1} \partial \Sigma| ,$$where $R_0 ^{-1}\partial\Sigma \subset \mathbb{S}^{n-1}$ is nothing but the dilation of $\partial \Sigma$ into the $(n-1)-$sphere of radius one.
\end{corollary}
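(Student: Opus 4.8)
The plan is to read off the inequality \eqref{CorEquality} directly from the monotonicity identity \eqref{ThetaArea}, and then to analyze the equality case by tracking exactly which term is forced to vanish. First I would note that the integrand $\frac{f}{h^{n-1}(r)}|\partial_r^\perp|^2_g$ appearing in \eqref{ThetaArea} is pointwise non-negative: the static potential $f=h'(r)=\sqrt{1-2ms^{2-n}}$ is strictly positive on the interior $r>0$ (and vanishes only on the horizon), $h>0$, and $|\partial_r^\perp|^2_g \geq 0$ trivially. Hence the full integral term in \eqref{ThetaArea} is $\geq 0$, and dropping it yields $\Theta_\Gamma(\Sigma) \geq \frac{area(\partial\Sigma)}{2m|\Gamma|}$, which is exactly \eqref{CorEquality}.

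For the rigidity statement, equality in \eqref{CorEquality} forces $\int_\Sigma \frac{f}{h^{n-1}(r)}|\partial_r^\perp|^2_g \, dv = 0$, and since the integrand is non-negative and $f>0$ on the interior, this forces $\partial_r^\perp \equiv 0$ on $\Sigma$; that is, $\partial_r$ is everywhere tangent to $\Sigma$. Geometrically this means $\Sigma$ is invariant under the radial flow of $\partial_r$, i.e. $\Sigma$ is a union of integral curves of $\partial_r$ emanating orthogonally from $\partial M = S_0$. Since $\partial_r$ is (up to reparametrization) the radial direction in the conformally Euclidean picture, being tangent to $\Sigma$ means $\Sigma$ is a cone over its boundary: writing $\Sigma$ in the coordinates $\mathbb{R}^n \setminus \overline{B_0}$ with $\Sigma$ meeting $S_0$ orthogonally, the condition that the Euclidean radial vector is tangent to $\Sigma$ says precisely that $\Sigma = \{\lambda y : y \in R_0^{-1}\partial\Sigma,\ \lambda \geq R_0\}$. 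Being minimal, $\Sigma$ is then a minimal cone, so $R_0^{-1}\partial\Sigma$ is a minimal hypersurface $\Gamma$ in $\mathbb{S}^{n-1}$; conversely any minimal cone $\Sigma_\Gamma$ obviously has $\partial_r$ tangent and hence achieves equality. Finally, in the equality case \eqref{ThetaArea} collapses to $\Theta_\Gamma(\Sigma) = \frac{area(\partial\Sigma)}{2m|\Gamma|}$, and since $area(\partial\Sigma) = R_0^{\,n-2}|R_0^{-1}\partial\Sigma| \cdot$ (a metric factor from $g_{Sch}$ restricted to $S_0$), one computes directly using $R_0^{n-2} = m/2$ and the value of the conformal factor $f$ on $S_0$ that $|\Gamma|\Theta_\Gamma(\Sigma) = |R_0^{-1}\partial\Sigma|$, which is the displayed identity.

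The main obstacle I expect is the clean identification of "$\partial_r$ tangent to $\Sigma$" with "$\Sigma$ is a cone," and then the bookkeeping of metric normalization factors in the last displayed equality. The tangency-implies-cone step requires a short argument that an embedded minimal hypersurface invariant under the dilation flow must be an honest cone over a sphere hypersurface (one must rule out pathologies and check that the flow is complete on $\Sigma$ — this uses properness of $\Sigma$ and the free boundary condition that forces the generating curves to start orthogonally at $S_0$). The final constant-chasing is routine but must be done carefully: one needs to relate the area of $\partial\Sigma$ measured in $g_{Sch}$ on the horizon $S_0 = \mathbb{S}^{n-1}(R_0)$ to the area of its dilation $R_0^{-1}\partial\Sigma$ in the unit sphere, using that $g_{Sch}$ restricted to $S_0$ is $\left(1 + \frac{m}{2R_0^{n-2}}\right)^{4/(n-2)} R_0^2 \, g_{\mathbb{S}^{n-1}} = 4 R_0^2\, g_{\mathbb{S}^{n-1}}$ (since $\frac{m}{2R_0^{n-2}} = 1$), together with $h(0) = s_0 = (2m)^{1/(n-2)} = 2^{1/(n-2)} R_0$, to cancel everything against the $2m$ in the denominator.
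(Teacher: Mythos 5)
Your proposal is correct and follows essentially the same route as the paper's proof: the inequality is read off from \eqref{ThetaArea} because the integrand is non-negative, equality forces $\partial_r^{\perp}\equiv 0$ so that $\Sigma$ is invariant under the radial flow and hence a minimal cone, and the rigidity identity comes from computing $area(\partial\Sigma)=2m\,|R_0^{-1}\partial\Sigma|$ on the horizon (the paper states these steps more tersely). One slip in your constant-chasing: the induced metric is $g_{Sch}|_{S_0}=2^{4/(n-2)}R_0^2\,g_{\mathbb{S}^{n-1}}$, not $4R_0^2\,g_{\mathbb{S}^{n-1}}$ (and $s_0=(2m)^{1/(n-2)}=2^{2/(n-2)}R_0$); the factor $4$ only appears after raising the length scaling $2^{2/(n-2)}$ to the power $n-2$ for the $(n-2)$-dimensional area, which gives $area(\partial\Sigma)=4R_0^{n-2}\,|R_0^{-1}\partial\Sigma|=2m\,|R_0^{-1}\partial\Sigma|$ in every dimension, as needed.
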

\begin{proof}
The inequality follows from \eqref{ThetaArea}. In the case of equality, $\Sigma$ must be a cone and, in particular, $\partial \Sigma $ is a minimal hypersurface in $(S(R_0) ,g_{Sch})$. We can easily compute 
$$ area(\partial \Sigma) = 2m |R_0^{-1}\partial \Sigma| ,$$which finishes the proof. 
\end{proof}

At this point, we might consider different minimal cones in order to establish rigidity results in terms of the density

\subsection{Proof of Theorem \ref{ThMN}}
Since we are assuming equality in \eqref{CorEquality}, $\Sigma$ must be a cone and, in particular, $\partial \Sigma $ is a minimal hypersurface in $(S(R_0) ,g_{Sch})$ and $|R_0 ^{-1} \partial \Sigma| \leq 2 \pi ^2$. Hence, \cite{MaNe} implies that $R_0 ^{-1}\partial \Sigma \subset \mathbb{S}^3$ is either a great sphere or a Clifford torus, which proves this theorem.  

\subsection{Proof of Theorem \ref{ThAllard}}
In any dimension, by the Monotonicity Formula and Allard's Regularity Theorem \cite{Allard}, we can obtain Theorem \ref{ThAllard}.


\end{document}